\documentclass[a4paper,11pt,fleqn]{article}
\usepackage{enumitem}
\usepackage{amsmath}
\usepackage{amssymb}
\usepackage{pifont}
\usepackage{theorem} 
\usepackage{euscript}
\usepackage{exscale,relsize}
\usepackage{epic,eepic}
\usepackage{multicol}
\usepackage{makeidx}
\usepackage{srcltx}         
\usepackage{xcolor}
\usepackage{url}
\usepackage{charter}
\usepackage{mathrsfs} 
\usepackage{graphicx}
\usepackage{authblk}
\usepackage{tikz}
\newcommand{\email}[1]{\href{mailto:#1}{\nolinkurl{#1}}}
\usepackage[normalem]{ulem}     
\usepackage[usenames,dvipsnames]{pstricks}
\usepackage{pstricks-add}
\usepackage{pst-grad}
\usepackage{pst-plot} 
\usepackage{pst-eucl} 
\topmargin     0.0cm
\oddsidemargin -0.2cm
\textwidth     16.9cm 
\headheight    0.0cm
\textheight    22.7cm
\parindent     6mm
\parskip       10pt
\tolerance     1000
\newlength{\mySubFigSize}
\setlength{\mySubFigSize}{5cm}
\definecolor{labelkey}{rgb}{0,0.08,0.45}
\definecolor{refkey}{rgb}{0,0.6,0.0}
\definecolor{Brown}{rgb}{0.45,0.0,0.05}
\definecolor{dbrown}{rgb}{0.50,0.25,0.00}
\definecolor{dgreen}{rgb}{0.00,0.49,0.00}
\definecolor{dblue}{rgb}{0,0.08,0.75}
\RequirePackage[dvips,colorlinks,hyperindex]{hyperref}
\hypersetup{linktocpage=true,citecolor=dblue,linkcolor=dgreen}
\tolerance 2500

\renewcommand{\leq}{\ensuremath{\leqslant}}
\renewcommand{\geq}{\ensuremath{\geqslant}}

\newcommand{\minimize}[2]{\ensuremath{\underset{\substack{{#1}}}%
{\text{minimize}}\;\;#2 }}
\newcommand{\Frac}[2]{\displaystyle{\frac{#1}{#2}}} 
 
\newcommand{\scal}[2]{{\langle{{#1}\mid{#2}}\rangle}}

\newcommand{\menge}[2]{\big\{{#1}~\big |~{#2}\big\}} 
\newcommand{\Menge}[2]{\left\{{#1}~\left|~{#2}\right.\right\}}

\newcommand{\HH}{\ensuremath{{\mathcal H}}}

\newcommand{\GG}{\ensuremath{{\mathcal G}}}

\newcommand{\Sum}{\ensuremath{\displaystyle\sum}}
\newcommand{\emp}{\ensuremath{{\varnothing}}}

\newcommand{\Id}{\ensuremath{\operatorname{Id}}\,}
\newcommand{\ID}{\ensuremath{\boldsymbol{\operatorname{Id}}}\,}

\newcommand{\RR}{\ensuremath{\mathbb{R}}}
\newcommand{\RP}{\ensuremath{\left[0,+\infty\right[}}

\newcommand{\RPP}{\ensuremath{\left]0,+\infty\right[}}

\newcommand{\RPX}{\ensuremath{\left[0,+\infty\right]}}
\newcommand{\RX}{\ensuremath{\left]-\infty,+\infty\right]}}
\newcommand{\RXX}{\ensuremath{\left[-\infty,+\infty\right]}}

\newcommand{\NN}{\ensuremath{\mathbb N}}

\newcommand{\intdom}{\text{\text{\rm int\,dom}\,}}

\newcommand{\pinf}{\ensuremath{{+\infty}}}
\newcommand{\minf}{\ensuremath{{-\infty}}}
\newcommand{\epi}{\ensuremath{\text{\rm epi}\,}}
\newcommand{\dom}{\ensuremath{\text{\rm dom}\,}}

\newcommand{\rec}{\ensuremath{\text{\rm rec}\,}}
\newcommand{\prox}{\ensuremath{\text{\rm prox}}}
\newcommand{\proj}{\ensuremath{\text{\rm proj}}}

\newcommand{\spa}{\ensuremath{\text{span}\,}}

\newcommand{\sign}{\ensuremath{\text{\rm sign}}}

\newcommand{\inte}{\ensuremath{\text{\rm int}}}

\newcommand{\cone}{\ensuremath{\text{\rm cone}\,}}

\newcommand{\reli}{\ensuremath{\text{\rm ri}\,}}
\newcommand{\infconv}{\ensuremath{\mbox{\small$\,\square\,$}}}


\newtheorem{theorem}{Theorem}[section]
\newtheorem{lemma}[theorem]{Lemma}

\newtheorem{proposition}[theorem]{Proposition}

\theoremstyle{plain}{\theorembodyfont{\rmfamily}%
}
\theoremstyle{plain}{\theorembodyfont{\rmfamily}%
\newtheorem{example}[theorem]{Example}}
\theoremstyle{plain}{\theorembodyfont{\rmfamily}%
\newtheorem{remark}[theorem]{Remark}}
\theoremstyle{plain}{\theorembodyfont{\rmfamily}%
}
\theoremstyle{plain}{\theorembodyfont{\rmfamily}%
}
\theoremstyle{plain}{\theorembodyfont{\rmfamily}%
}
\theoremstyle{plain}{\theorembodyfont{\rmfamily}%
}
\theoremstyle{plain}{\theorembodyfont{\rmfamily}%
\newtheorem{model}[theorem]{Model}}
\theoremstyle{plain}{\theorembodyfont{\rmfamily}%
\newtheorem{problem}[theorem]{Problem}}

\numberwithin{equation}{section}
\setlength{\itemsep}{1pt} 
\begin{document}

\title{\sffamily\LARGE Perspective Maximum Likelihood-Type 
Estimation\\ via Proximal Decomposition\footnote{Contact author: 
P. L. Combettes, \email{plc@math.ncsu.edu},
phone: +1 (919) 515 2671. The work of P. L. Combettes was 
supported by the National Science Foundation under grant 
DMS-1818946.}}

\author{Patrick L. Combettes$^1$ and Christian L. M\"uller$^2$\\
\small
\small $\!^1$North Carolina State University,
Department of Mathematics,
Raleigh, NC 27695-8205, USA\\
\small \email{plc@math.ncsu.edu}\\
\small \vskip 2mm
\small $\!^2$Flatiron Institute, Simons Foundation,
New York, NY 10010, USA\\
\small \email{cmueller@flatironinstitute.org}
}

\date{\sffamily ~}

\maketitle

\vskip 8mm

\begin{abstract}
We introduce an optimization model for maximum likelihood-type
estimation (M-estimation) that generalizes a large class of
existing statistical models, including Huber's concomitant
M-estimator, Owen's Huber/Berhu concomitant estimator, 
the scaled lasso, support vector machine
regression, and penalized estimation with structured sparsity.
The model, termed perspective M-estimation, leverages the
observation that convex M-estimators with concomitant scale as
well as various regularizers are instances of perspective functions. 
Such functions are amenable to proximal analysis, which
leads to principled and provably convergent optimization algorithms 
via proximal splitting. Using a geometrical approach based on 
duality, we derive novel proximity operators for several
perspective functions of interest. Numerical experiments on
synthetic and real-world data illustrate the broad applicability of
the proposed framework. 
\end{abstract}

\newpage
\section{Introduction}
High-dimensional regression methods play a pivotal role in 
modern data analysis. A large body of statistical work has
focused on estimating regression coefficients under various
structural assumptions, such as sparsity of the regression vector
\cite{Tibs96}. In the standard linear model, regression
coefficients constitute, however, only one aspect of the model. 
A more
fundamental objective in statistical inference is the estimation of
both location (i.e., the regression coefficients) and scale (e.g.,
the standard deviation of the noise) of the statistical model from
the data. A common approach is to decouple this estimation process by
designing and analyzing individual estimators for scale and
location parameters (see, e.g., \cite[pp.~140]{Hube81},
\cite{Yuch17}) because joint estimation often leads to non-convex
formulations \cite{Daye2012,Stad10}. One important exception has
been proposed in robust statistics in the form of a maximum
likelihood-type estimator (M-estimator) for location with
concomitant scale \cite[pp.~179]{Hube81}, which couples
both parameters via a convex objective function. To discuss this
approach more precisely, we introduce the linear
heteroscedastic mean shift regression model. This data
formation model will be used throughout the paper.
 
\begin{model}
\label{m:1}
The vector $y=(\eta_i)_{1\leq i\leq n}\in\RR^n$ of observations is 
\begin{equation}
\label{e:1}
y=X\overline{b}+\overline{o}+Ce,
\end{equation}
where $X\in\RR^{n\times p}$ is a known design matrix with rows
$(x_i)_{1\leq i\leq n}$, $\overline{b}\in\RR^p$ is the unknown
regression vector (location), $\overline{o}\in\RR^n$ is the unknown
mean shift
vector containing outliers, $e\in\RR^n$ is a vector of realizations
of i.i.d. zero mean random variables, and $C\in\RP^{n\times n}$ is
a diagonal matrix the diagonal of which are the (unknown)
non-negative standard deviations. One obtains the 
homoscedastic mean shift model when the diagonal entries 
of $C$ are identical.
\end{model}

The concomitant M-estimator proposed in \cite[pp.~179]{Hube81} is
based on the objective function 
\begin{equation}
\label{e:HuberOrig}
(\sigma,b)\mapsto {\frac{\sigma}{n}\sum_{i=1}^n 
\Bigg(\mathsf{h}_{\rho_1} 
\bigg(\frac{x_i^\top b-\eta_i}{\sigma}\bigg)+\delta\Bigg)},
\end{equation}
where $\mathsf{h}_{\rho_1}$ is the Huber function \cite{Hube64}
with parameter $\rho_1\in\RPP$, $\delta\in\RP$, and the scalar
$\sigma$ is a scale.
The objective function, which we also refer to as the homoscedastic 
Huber M-estimator function, is jointly convex in both $b$
and scalar $\sigma$, and hence, amenable to global optimization.
Under suitable assumptions, this estimator can identify outliers $o$ 
and can estimate a scale that is proportional to the diagonal 
entries of $C$ in the homoscedastic case,
In \cite{Anto10}, it 
was proposed that joint convex optimization of
regression vector and standard deviation may also be advantageous
in sparse linear regression. There, the objective function is
\begin{equation}
\label{e:AntonOrig}
(\sigma,b)\mapsto {\frac{\sigma}{n}\sum_{i=1}^n \Bigg(
\Bigg|\frac{x_i^\top b-\eta_i}{\sigma}\Bigg|^2+
\delta\Bigg)+\alpha_1\|b\|_1},
\end{equation}
where the term $\|\cdot\|_1$ promotes sparsity of the regression
estimate, $\alpha_1\in\RPP$ is a tuning parameter, and $\sigma$ is an
estimate of the standard deviation. This objective function is at
the heart of the scaled lasso estimator \cite{Sunt12}. The
resulting estimator
is not robust to outliers but is equivariant, which makes
the tuning parameter $\alpha_1$ independent of the noise level. In
\cite{Owen07}, an extension of \eqref{e:HuberOrig} was introduced
that includes a new penalization function as well as 
concomitant scale
estimation for the regression vector. The 
objective function is
\begin{equation}
\label{e:OwenOrig}
(\sigma,\tau,b)\mapsto {\frac{\sigma}{n}\sum_{i=1}^n
\Bigg(\mathsf{h}_{\rho_1} 
\bigg(\frac{x_i^\top b-\eta_i}{\sigma}\bigg)+\delta_1\Bigg) +
\frac{\alpha_1\tau}{p}\sum_{i=1}^p \Bigg( \mathsf{b}_{\rho_2} 
\bigg(\frac{\beta_i}{\tau}\bigg)+\delta_2\Bigg)},
\end{equation}
where $\mathsf{b}_{\rho_2}$ is the reverse Huber (Berhu) function
\cite{Owen07} with parameter $\rho_2\in\RPP$, constants
$\delta_1\in\RPP$ and $\delta_2\in\RPP$, and tuning parameter
$\alpha_1\in\RPP$. This objective function is jointly
convex in $b$ and the scalar parameters $\sigma$ and $\tau$. The
estimator
inherits the equivariance and robustness of the previous
estimators. In addition, the Berhu penalty is advantageous when the
design matrix comprises correlated rows \cite{Lamb16}.   
In \cite{Jmaa18}, it was observed that these objective functions, as
well as many regularization penalties for structured sparsity 
\cite{Bach2011,Micc13,Macd16}, are instances of the class of 
composite ``perspective functions'' \cite{Svva17}. 

In the present paper, we leverage the ubiquity of perspective
functions in statistical M-estimation and introduce a new
statistical optimization model, \emph{perspective M-estimation}. 
The perspective M-estimation model, put forward in detail in
\eqref{e:prob0}, uses perspective functions as
fundamental building blocks to couple scale and regression
variables in a jointly convex fashion. It includes in particular the 
M-estimators discussed above as special cases. For a large class of
perspective
functions, proximal analysis enables the principled construction of
proximity operators, a key ingredient for minimization of the model
using proximal algorithms \cite{Jmaa18}. Using geometrical insights
revealed by the dual problem, we derive new proximity
operators for several perspective functions, including
the generalized scaled lasso, the generalized Huber, the abstract
Vapnik, and the generalized Berhu function. This enables the
development of a unifying algorithmic framework for global
optimization of the proposed model using modern splitting
techniques. The model also allows seamless integration of a large
class of regularizers for structured sparsity and novel robust
heteroscedastic estimators of location and scale. Numerical
experiments on synthetic and real-world data illustrate the
applicability of the framework.

\section{Proximity operators of perspective functions}
\label{sec:2}

The general perspective M-estimation model proposed in
Problem~\ref{prob:1} hinges on the notion of a perspective function
(see \eqref{e:perspective} below). 
To solve this problem we need
to be able to compute the proximity operators of such functions.
The properties of these proximity operators were investigated in 
\cite{Jmaa18}, where some examples of computation were presented.
In this section, we derive further instances of explicit
expressions for the proximity operator of perspective functions.
Since these results are of general interest beyond
statistical analysis, throughout, $\HH$ is a real
Hilbert space with scalar product $\scal{\cdot}{\cdot}$ and
associated norm $\|\cdot\|$.

\subsection{Notation and background on convex analysis}

The closed ball with center $x\in\HH$ and radius $\rho\in\RPP$ is
denoted by $B(x;\rho)$. Let $C$ be a subset of $\HH$. Then 
\begin{equation}
\label{e:iota}
\iota_C\colon\HH\to\{0,\pinf\}\colon x\mapsto
\begin{cases}
0,&\text{if}\;\;x\in C;\\
\pinf,&\text{if}\;\;x\notin C
\end{cases}
\end{equation}
is the indicator function of $C$, 
\begin{equation}
\label{e:dictC}
d_C\colon\HH\to\RPX\colon x\mapsto\inf_{y\in C}\|y-x\|
\end{equation}
is the distance function to $C$, and 
\begin{equation}
\label{e:support}
\sigma_C\colon\HH\to\RXX\colon u
\mapsto\sup_{x\in C}\scal{x}{u}
\end{equation}
is the support function of $C$. 
If $C$ is nonempty, closed, and convex then, for every 
$x\in\HH$, there exists a unique point $\proj_Cx\in C$, called 
the projection of $x$ onto $C$, such that $\|x-\proj_Cx\|=d_C(x)$.
We have
\begin{equation}
\label{e:kolmogorov}
(\forall x\in\HH)(\forall p\in\HH)\quad
p=\proj_Cx\quad\Leftrightarrow\quad\big[\,p\in C
\quad\text{and}\quad
(\forall y\in C)\quad\scal{y-p}{x-p}\leq 0\,\big].
\end{equation}
The normal cone to $C$ is 
\begin{equation}
\label{e:nc}
N_C=\partial\iota_C\colon\HH\to 2^\HH\colon x\mapsto 
\begin{cases}
\menge{u\in\HH}{\sup\scal{C-x}{u}\leq 0},
&\text{if}\;\;x\in C;\\
\emp,&\text{otherwise.}
\end{cases}
\end{equation}
A function $\varphi\colon\HH\to\RX$ is proper if 
$\dom\varphi=\menge{x\in\HH}{\varphi(x)<\pinf}\neq\emp$ and
coercive if $\lim_{\|x\|\to\pinf}\varphi(x)=\pinf$.
We denote by $\Gamma_0(\HH)$ the class of proper lower 
semicontinuous convex functions from $\HH$ to $\RX$. Let
$\varphi\in\Gamma_0(\HH)$. The conjugate of $\varphi$ is 
\begin{equation}
\label{e:conj}
\varphi^*\colon\HH\to\RX\colon u\mapsto\sup_{x\in\HH}\big(
\scal{x}{u}-\varphi(x)\big).
\end{equation}
It also belongs to $\Gamma_0(\HH)$ and $\varphi^{**}=\varphi$.
The Moreau subdifferential of $\varphi$ is the set-valued operator
\begin{equation}
\label{e:subdiff}
\partial \varphi\colon\HH\to 2^\HH\colon x\mapsto 
\menge{u\in\HH}{(\forall y\in\dom \varphi)\;\:
\scal{y-x}{u}+\varphi(x)\leq \varphi(y)}.
\end{equation}
We have
\begin{equation}
\label{e:subdiff2}
(\forall x\in\HH)(\forall u\in\HH)\quad
u\in\partial \varphi(x)\quad\Leftrightarrow\quad
x\in\partial \varphi^*(u).
\end{equation}
Moreover,
\begin{equation}
\label{e:subdiff11}
(\forall x\in\HH)(\forall u\in\HH)\quad
\varphi(x)+\varphi^*(u)\geq\scal{x}{u}
\end{equation}
and 
\begin{equation}
\label{e:subdiff12}
(\forall x\in\HH)(\forall u\in\HH)\quad
u\in\partial \varphi(x)\quad\Leftrightarrow\quad
\varphi(x)+\varphi^*(u)=\scal{x}{u}.
\end{equation}
If $\varphi$ is G\^ateaux differentiable at 
$x\in\dom f$, with gradient $\nabla \varphi(x)$, then 
\begin{equation}
\label{e:2384572k}
\partial\varphi(x)=\{\nabla \varphi(x)\}. 
\end{equation}
The infimal convolution of $\varphi$ and $\psi\in\Gamma_0(\HH)$ is
\begin{equation}
\label{e:infconv}
\varphi\infconv\psi\colon\HH\to\RXX\colon x\mapsto
\inf_{y\in\HH}\big(\varphi(y)+\psi(x-y)\big).
\end{equation}
Given any $z\in\dom\varphi$, the recession function of $\varphi$ is
\begin{equation}
\label{e:2015-09-23a}
(\forall x\in\HH)\quad(\rec\varphi)(x)=\sup_{y\in\dom\varphi}
\big(\varphi(x+y)-\varphi(y)\big)=\lim_{\alpha\to\pinf}
\frac{\varphi(z+\alpha x)}{\alpha}.
\end{equation}
Finally, the proximity operator of $\varphi$ is \cite{Mor62b}
\begin{equation}
\label{e:jjm1962}
\prox_\varphi\colon\HH\to\HH\colon x\mapsto
\underset{y\in\HH}{\text{argmin}}
\bigg(\varphi(y)+\frac{1}{2}\|x-y\|^2\bigg).
\end{equation}
For detailed accounts of convex analysis, see 
\cite{Livre1,Rock70}.

\subsection{Perspective functions}
Let $\varphi\in\Gamma_0(\HH)$. The perspective of $\varphi$ is
\begin{equation}
\label{e:perspective}
\widetilde{\varphi}\colon\RR\times\HH\to\RX\colon 
(\sigma,x)\mapsto
\begin{cases}
\sigma\varphi(x/\sigma),&\text{if}\;\:\sigma>0;\\
(\rec\varphi)(x),&\text{if}\;\:\sigma=0;\\
\pinf,&\text{otherwise.}
\end{cases}
\end{equation}
We have $\widetilde{\varphi}\in\Gamma_0(\RR\oplus\GG)$
\cite[Proposition~2.3]{Svva17}. The following result is useful
to establish existence results for problems involving perspective
functions.

\begin{proposition}
\label{p:coer}
Let $\varphi\in\Gamma_0(\HH)$ be such that $\inf\varphi(\HH)>0$ and
$0\in\intdom\varphi^*$. Then $\widetilde{\varphi}$ is coercive.
\end{proposition}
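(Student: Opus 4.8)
The plan is to construct an affine minorant of $\widetilde{\varphi}$ that already grows linearly with the norm on the closed half-space $\{(\sigma,x)\colon\sigma\geq 0\}$; since $\widetilde{\varphi}(\sigma,x)=\pinf$ whenever $\sigma<0$, such a minorant forces coercivity. First I would convert the two hypotheses into two lower bounds on $\varphi$. Because $\varphi^*\in\Gamma_0(\HH)$ and $0\in\intdom\varphi^*$, the conjugate is bounded above on a neighborhood of the origin (a function in $\Gamma_0(\HH)$ is locally bounded above on the interior of its domain); fix $\rho\in\RPP$ and $M\in\RP$ such that $\varphi^*(u)\leq M$ for every $u\in B(0;\rho)$. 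Applying the Fenchel--Young inequality \eqref{e:subdiff11} with $u=\rho w/\|w\|$ (and trivially at $w=0$) then yields the linear minorant
\[
(\forall w\in\HH)\quad \varphi(w)\geq\rho\|w\|-M.
\]
The other hypothesis supplies $\mu:=\inf\varphi(\HH)>0$, and these are the only two facts about $\varphi$ I intend to use.

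Next I would transfer these bounds to $\widetilde{\varphi}$. For $\sigma\in\RPP$, the homogeneity $\widetilde{\varphi}(\sigma,x)=\sigma\varphi(x/\sigma)$ combined with the two bounds gives at once
\[
\widetilde{\varphi}(\sigma,x)\geq\rho\|x\|-M\sigma
\qquad\text{and}\qquad
\widetilde{\varphi}(\sigma,x)\geq\mu\sigma.
\]
For $\sigma=0$, inserting $\varphi(z+\alpha x)\geq\rho\|z+\alpha x\|-M$ into the limit formula \eqref{e:2015-09-23a} and letting $\alpha\to\pinf$ (using $\|z+\alpha x\|/\alpha\to\|x\|$) yields $\widetilde{\varphi}(0,x)=(\rec\varphi)(x)\geq\rho\|x\|$.

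I would then combine the two bounds on the region $\sigma>0$. Taking the convex combination with any weight $\lambda\in\left]0,\mu/(\mu+M)\right[$ makes both resulting coefficients strictly positive, so that $\widetilde{\varphi}(\sigma,x)\geq c_1\|x\|+c_2\sigma$ with $c_1=\lambda\rho>0$ and $c_2=(1-\lambda)\mu-\lambda M>0$. Since $c_1\leq\rho$, the case $\sigma=0$ above satisfies the same inequality, and the case $\sigma<0$ is covered because the left-hand side is $\pinf$. Consequently $\widetilde{\varphi}(\sigma,x)\geq\min\{c_1,c_2\}\,(\|x\|+\sigma)\geq\min\{c_1,c_2\}\,\|(\sigma,x)\|$ throughout $\sigma\geq 0$, whence $\widetilde{\varphi}(\sigma,x)\to\pinf$ as $\|(\sigma,x)\|\to\pinf$, i.e. $\widetilde{\varphi}$ is coercive.

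The step I expect to be genuinely delicate is the interplay in this last combination, not any single estimate: the linear-in-$x$ minorant is by itself \emph{not} coercive, since along rays on which $\|x\|/\sigma$ stays bounded its defect $-M\sigma$ can absorb the growth in $\|x\|$, and it is precisely here that $\inf\varphi(\HH)>0$ becomes indispensable to control the $\sigma$-direction (if $\mu=0$ the coefficient $c_2$ cannot be made positive). A secondary point requiring care is the passage, in the first step, from $0\in\intdom\varphi^*$ to a uniform bound $\varphi^*\leq M$ on a ball: this is immediate in finite dimensions but in general rests on the local boundedness of functions in $\Gamma_0(\HH)$ on the interior of their domain.
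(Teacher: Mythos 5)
Your proof is correct, but it takes a genuinely different route from the paper's. The paper's argument is a two-line reduction to abstract machinery: from $\varphi^*(0)=-\inf\varphi(\HH)<0$ and $0\in\intdom\varphi^*$ it deduces $(0,0)\in\inte\,\epi\varphi^*$, identifies (via \cite[Proposition~2.3(iv)]{Svva17}) the set $\menge{(\mu,u)}{\mu+\varphi^*(u)\leq 0}$ with $\dom(\widetilde{\varphi})^*$, and concludes by the general criterion that a function in $\Gamma_0$ is coercive if and only if $0$ lies in the interior of the domain of its conjugate \cite[Proposition~14.16]{Livre1}. You instead build an explicit linear minorant $c_1\|x\|+c_2\sigma$ of $\widetilde{\varphi}$ on the half-space $\sigma\geq 0$ by combining the Fenchel--Young bound $\varphi\geq\rho\|\cdot\|-M$ (valid because $\varphi^*$ is locally bounded above near $0$, which you correctly flag as the infinite-dimensional subtlety) with $\varphi\geq\mu>0$, weighting the two so that both coefficients stay positive; your handling of the $\sigma=0$ stratum via the limit formula for the recession function and of $\sigma<0$ trivially is also correct. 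In effect you are re-proving by hand the relevant direction of \cite[Proposition~14.16]{Livre1}: your minorant is exactly the statement that $(\widetilde{\varphi})^*\leq 0$ on $\left]\minf,c_2\right]\times B(0;c_1)$, a neighborhood of the origin, so the two proofs rest on the same mechanism. What your version buys is a self-contained, quantitative argument with explicit constants and no appeal to the structure of $\dom(\widetilde{\varphi})^*$ from \cite{Svva17}; what the paper's version buys is brevity and a cleaner conceptual explanation of why both hypotheses are exactly what is needed. Your closing remarks correctly identify where each hypothesis is indispensable.
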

\begin{proof}
We have $\varphi^*(0)=-\inf\varphi(\HH)<0$ and
$0\in\intdom\varphi^*$. Hence,
$(0,0)\in\inte\,\epi{\varphi^*}$.
In turn, we derive from \cite[Proposition~2.3(iv)]{Svva17} that 
\begin{equation}
(0,0)\in\inte\,\menge{(\mu,u)\in\RR\oplus\HH}
{\mu+\varphi^*(u)\leq 0}=
\inte\,\dom(\widetilde{\varphi})^*.
\end{equation}
It therefore follows from \cite[Proposition~14.16]{Livre1} that 
$\widetilde{\varphi}$ is coercive.
\end{proof}

Let us now turn to the proximity operator of
$\widetilde{\varphi}$.

\begin{lemma}{\rm\cite[Theorem~3.1]{Jmaa18}}
\label{l:1}
Let $\varphi\in\Gamma_0(\HH)$, let $\gamma\in\RPP$, let
$\sigma\in\RR$, and let $x\in\HH$. Then the following hold:
\begin{enumerate}
\item
\label{l:1i}
Suppose that $\sigma+\gamma\varphi^*(x/\gamma)\leq 0$. 
Then $\prox_{\gamma\widetilde{\varphi}}(\sigma,x)=(0,0)$.
\item
\label{l:1ii}
Suppose that $\dom\varphi^*$ is open and that 
$\sigma+\gamma\varphi^*(x/\gamma)>0$. Then 
\begin{equation}
\label{e:ramm1}
\prox_{\gamma\widetilde{\varphi}}(\sigma,x)=
\big(\sigma+\gamma\varphi^*(p),x-\gamma p\big), 
\end{equation}
where $p$ is the unique solution to the inclusion
$x\in\gamma p+(\sigma+\gamma\varphi^*(p))\partial\varphi^*(p)$.
If $\varphi^*$ is differentiable at $p$, then
$p$ is characterized by
$x=\gamma p+(\sigma+\gamma\varphi^*(p))\nabla\varphi^*(p)$.
\end{enumerate}
\end{lemma}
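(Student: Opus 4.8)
The plan is to reduce the whole computation to a single projection onto a convex set via Moreau's decomposition. First I would observe that $\gamma\widetilde{\varphi}=\widetilde{\gamma\varphi}$: indeed, for $\sigma>0$ one has $\gamma\widetilde{\varphi}(\sigma,x)=\gamma\sigma\varphi(x/\sigma)=\sigma(\gamma\varphi)(x/\sigma)$, and $\rec(\gamma\varphi)=\gamma\rec\varphi$, so $\gamma\widetilde{\varphi}$ is itself a perspective function. The conjugate formula for perspectives, i.e., the same identity \cite[Proposition~2.3(iv)]{Svva17} already invoked in the proof of Proposition~\ref{p:coer}, then gives $(\gamma\widetilde{\varphi})^*=\iota_C$, where
\begin{equation}
C=\menge{(\mu,u)\in\RR\oplus\HH}{\mu+\gamma\varphi^*(u/\gamma)\leq 0}
\end{equation}
is a nonempty closed convex set (it is the $0$-sublevel set of the lower semicontinuous convex function $g\colon(\mu,u)\mapsto\mu+\gamma\varphi^*(u/\gamma)$, and it is nonempty because, fixing $u_0\in\dom\varphi^*$, the point $(\mu,\gamma u_0)$ lies in $C$ for $\mu$ small). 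Since $\prox_{\iota_C}=\proj_C$, Moreau's decomposition $\Id=\prox_{\gamma\widetilde{\varphi}}+\prox_{(\gamma\widetilde{\varphi})^*}$ yields
\begin{equation}
\prox_{\gamma\widetilde{\varphi}}(\sigma,x)=(\sigma,x)-\proj_C(\sigma,x),
\end{equation}
so the entire statement becomes a description of $\proj_C(\sigma,x)$.

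For \ref{l:1i}, the hypothesis $\sigma+\gamma\varphi^*(x/\gamma)\leq 0$ is exactly the statement that $(\sigma,x)\in C$. Hence $\proj_C(\sigma,x)=(\sigma,x)$ and $\prox_{\gamma\widetilde{\varphi}}(\sigma,x)=(0,0)$, which settles the first claim.

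For \ref{l:1ii}, the hypothesis says $(\sigma,x)\notin C$, so the projection $(\mu,u)=\proj_C(\sigma,x)$ lies on the boundary; in particular $(\mu,u)\in C$ forces $u/\gamma\in\dom\varphi^*$, hence $g(\mu,u)=0$, i.e. $\mu=-\gamma\varphi^*(u/\gamma)$. I would then characterize $(\mu,u)$ through the Kolmogorov criterion \eqref{e:kolmogorov}, which reads $(\sigma,x)-(\mu,u)\in N_C(\mu,u)$. Because $\dom\varphi^*$ is open, $\varphi^*$ is continuous on it and $\partial\varphi^*$ is nonempty-valued there, so $g$ is continuous at $(\mu,u)$; together with the Slater point exhibited above this validates the sublevel-set normal-cone formula $N_C(\mu,u)=\RP\,\partial g(\mu,u)$, where the chain rule gives $\partial g(\mu,u)=\{1\}\times\partial\varphi^*(u/\gamma)$. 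Writing $p=u/\gamma$ and a multiplier $\lambda\in\RP$, the two components of $(\sigma,x)-(\mu,u)=\lambda\,\partial g(\mu,u)$ become $\sigma-\mu=\lambda$ and $x-\gamma p\in\lambda\,\partial\varphi^*(p)$; since $(\sigma,x)\neq(\mu,u)$ we get $\lambda>0$. Substituting $\mu=-\gamma\varphi^*(p)$ gives $\lambda=\sigma+\gamma\varphi^*(p)>0$ and the inclusion $x\in\gamma p+(\sigma+\gamma\varphi^*(p))\partial\varphi^*(p)$, whence
\begin{equation}
\prox_{\gamma\widetilde{\varphi}}(\sigma,x)=(\sigma,x)-(\mu,u)=\big(\sigma+\gamma\varphi^*(p),\,x-\gamma p\big),
\end{equation}
as announced; when $\varphi^*$ is differentiable at $p$, the characterization sharpens to $x=\gamma p+(\sigma+\gamma\varphi^*(p))\nabla\varphi^*(p)$ by \eqref{e:2384572k}.

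The main obstacle is precisely the step in \ref{l:1ii} that justifies $N_C(\mu,u)=\RP\,\partial g(\mu,u)$ at the boundary point and rules out a degenerate zero multiplier: this is where the combination of $(\sigma,x)\notin C$ and the openness of $\dom\varphi^*$ is essential, the latter ensuring $p=u/\gamma\in\dom\varphi^*$ lies in an interior where $g$ is finite and subdifferentiable and the constraint qualification holds. Uniqueness of $p$ then follows without extra work: the projection $(\mu,u)$ onto the closed convex set $C$ is unique, so $p=u/\gamma$ is uniquely determined, and conversely any solution $p'$ of the inclusion reconstructs, via $\lambda'=\sigma+\gamma\varphi^*(p')$ and $(\mu',u')=(-\gamma\varphi^*(p'),\gamma p')$, a point satisfying the (sufficient, by convexity) optimality conditions for $\proj_C(\sigma,x)$, forcing $p'=p$.
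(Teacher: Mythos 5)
Your argument is correct, and it is essentially the route taken by the cited source: the paper itself states this lemma without proof, deferring to \cite{Jmaa18}, but the identity you derive via $(\gamma\widetilde{\varphi})^*=\iota_C$ and Moreau's decomposition is exactly the projection formula \eqref{e:duhast} that the paper recalls from \cite[Remark~3.2]{Jmaa18} and uses in the proof of Proposition~\ref{p:1}. The only point stated more coarsely than in the source is the uniqueness clause for the inclusion, where your reconstruction argument covers solutions $p'$ with $\sigma+\gamma\varphi^*(p')\geq 0$ but does not by itself exclude hypothetical spurious solutions with a negative multiplier.
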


When $\dom\varphi^*$ is not open, we propose a
geometric construction instead of Lemma~\ref{l:1} to compute 
$\prox_{\gamma\widetilde{\varphi}}$ via the projection onto a
certain convex set. It is based on the following property, which 
reduces the problem of evaluating the proximity operator of
$\widetilde{\varphi}$ to a projection problem in $\RR^2$ if
$\varphi$ is radially symmetric.

\begin{proposition}
\label{p:1}
Let $\phi\in\Gamma_0(\RR)$ be an even function, set 
$\varphi=\phi\circ\|\cdot\|\colon\HH\to\RX$, let 
$\gamma\in\RPP$, let $\sigma\in\RR$, and let $x\in\HH$. Set
\begin{equation}
\label{e:R}
\EuScript{R}=\menge{(\chi,\nu)\in\RR^2}{\chi+\phi^*(\nu)\leq 0}.
\end{equation}
Then $\EuScript{R}$ is a nonempty closed convex set, and
the following hold:
\begin{enumerate}
\item
\label{p:1i}
Suppose that $\sigma+\gamma\phi^*(\|x\|/\gamma)\leq 0$. Then 
$\prox_{\gamma\widetilde{\varphi}}(\sigma,x)=(0,0)$.
\item
\label{p:1ii}
Suppose that $\sigma>\gamma\phi(0)$ and $x=0$. 
Then 
$\prox_{\gamma\widetilde{\varphi}}(\sigma,x)=
(\sigma-\gamma\phi(0),x)$.
\item
\label{p:1iii}
Suppose that $\sigma+\gamma\phi^*(\|x\|/\gamma)>0$ and $x\neq 0$, 
and set 
$(\chi,\nu)=\proj_{\EuScript{R}}(\sigma/\gamma,\|x\|/\gamma)$. Then 
\begin{equation}
\label{e:duhast0}
\prox_{\gamma\widetilde{\varphi}}(\sigma,x)=
\Bigg(\sigma-\gamma\chi,\bigg(1-\dfrac{\gamma\nu}{\|x\|}\bigg)
\,x\Bigg).
\end{equation}
\end{enumerate}
\end{proposition}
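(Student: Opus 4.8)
The plan is to transfer everything to the conjugate of $\widetilde\varphi$ and then exploit the rotational symmetry produced by $\varphi=\phi\circ\|\cdot\|$. First I record two facts. Since $\phi$ is even, a short radial computation identifies $\varphi^*=\phi^*\circ\|\cdot\|$: for $u\in\HH$, writing the supremum in \eqref{e:conj} in polar form gives $\varphi^*(u)=\sup_{\rho\ge 0}(\rho\|u\|-\phi(\rho))=\phi^*(\|u\|)$, the last equality holding because $\phi$ is even, so its defining supremum is attained at a nonnegative argument and $\phi^*$ is even. Moreover $\phi^*\in\Gamma_0(\RR)$, so $\EuScript R=\menge{(\chi,\nu)\in\RR^2}{(\nu,-\chi)\in\epi\phi^*}$ is the image of the nonempty closed convex set $\epi\phi^*$ under a linear isometry of $\RR^2$; hence $\EuScript R$ is nonempty, closed, and convex. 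Item (i) is then immediate from Lemma~\ref{l:1}(i): the identity $\varphi^*(x/\gamma)=\phi^*(\|x\|/\gamma)$ turns its hypothesis into $\sigma+\gamma\varphi^*(x/\gamma)\le 0$, so $\prox_{\gamma\widetilde\varphi}(\sigma,x)=(0,0)$.

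For item (ii) I would argue directly on the minimization defining $\prox_{\gamma\widetilde\varphi}(\sigma,0)$. Because $\phi$ is even and convex it attains its minimum at $0$, so for every admissible $\tau$ the map $z\mapsto\gamma\widetilde\varphi(\tau,z)+\tfrac12\|z\|^2$ is minimized at $z=0$, forcing the optimal second component to be $0$. The remaining scalar problem is to minimize $\tau\mapsto\gamma\tau\phi(0)+\tfrac12(\tau-\sigma)^2$ over $\tau\ge 0$, whose stationary point $\tau=\sigma-\gamma\phi(0)$ is admissible precisely under the hypothesis $\sigma>\gamma\phi(0)$. This yields $\prox_{\gamma\widetilde\varphi}(\sigma,0)=(\sigma-\gamma\phi(0),0)$.

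Item (iii) is the heart of the matter. Being positively homogeneous, $\widetilde\varphi$ has an indicator conjugate: by \cite[Proposition~2.3]{Svva17}, $(\widetilde\varphi)^*=\iota_D$ with $D=\menge{(\mu,u)\in\RR\oplus\HH}{\mu+\varphi^*(u)\le 0}=\menge{(\mu,u)}{\mu+\phi^*(\|u\|)\le 0}$. Moreau's decomposition \cite{Livre1}, together with $\prox_{\iota_{\gamma D}}=\proj_{\gamma D}$ and the scaling rule $\proj_{\gamma D}=\gamma\,\proj_D(\cdot/\gamma)$, gives the clean identity $\prox_{\gamma\widetilde\varphi}(\sigma,x)=(\sigma,x)-\gamma\,\proj_D(\sigma/\gamma,x/\gamma)$, so the whole computation reduces to a single projection onto $D$. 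Here I would invoke the rotational symmetry of $D$: it is invariant under every map $\Id\oplus R$ with $R$ orthogonal on $\HH$, and the point $(\sigma/\gamma,x/\gamma)$ is fixed by all such $R$ that fix $x$. Since metric projection is equivariant under isometries preserving $D$, the projection is fixed by the same group; as the common fixed subspace of the rotations fixing $x\neq 0$ is $\RR\oplus\RR x$, we get $\proj_D(\sigma/\gamma,x/\gamma)=(\chi,t\,e)$ for some $(\chi,t)\in\RR^2$, where $e=x/\|x\|$. Substituting this ansatz, the $\HH$-distance collapses to the scalar distance $|t-\|x\|/\gamma|$, while the constraint reads $\chi+\phi^*(\|t\,e\|)=\chi+\phi^*(|t|)=\chi+\phi^*(t)\le 0$ by the evenness of $\phi^*$. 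Thus projecting onto $D$ amounts to minimizing $\tfrac12(\chi-\sigma/\gamma)^2+\tfrac12(t-\|x\|/\gamma)^2$ over $(\chi,t)\in\EuScript R$; this is exactly $\proj_{\EuScript R}(\sigma/\gamma,\|x\|/\gamma)$, whose coordinates are the $(\chi,\nu)$ of the statement, so $t=\nu$. Feeding $\proj_D(\sigma/\gamma,x/\gamma)=(\chi,\nu\,x/\|x\|)$ back into the Moreau identity yields $\prox_{\gamma\widetilde\varphi}(\sigma,x)=(\sigma,x)-\gamma(\chi,\nu\,x/\|x\|)$, which is precisely \eqref{e:duhast0}.

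The main obstacle is the passage from the infinite-dimensional projection onto $D$ to the planar projection onto $\EuScript R$. Two points require care: one must genuinely justify that the $\HH$-component of $\proj_D$ lies on the ray through $x$, for which the cleanest route is the equivariance and fixed-subspace argument above, since it sidesteps any appeal to differentiability of $\phi^*$ and any sign discussion of a Lagrange multiplier; and one must verify that the reduced planar problem coincides with $\proj_{\EuScript R}$ with no spurious sign constraint on $t$, which is exactly where the evenness $\phi^*(|t|)=\phi^*(t)$ is essential. Once these are settled, clearing the factor $x/\|x\|$ and recombining with the Moreau identity is routine.
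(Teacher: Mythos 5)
Your proof is correct and follows the same overall strategy as the paper: reduce $\prox_{\gamma\widetilde{\varphi}}$ to the projection onto the set $\EuScript{C}=\menge{(\mu,u)}{\mu+\varphi^*(u)\leq 0}$ via the identity $\prox_{\gamma\widetilde{\varphi}}(\sigma,x)=(\sigma,x)-\gamma\proj_{\EuScript{C}}(\sigma/\gamma,x/\gamma)$, then use the radial symmetry coming from $\varphi=\phi\circ\|\cdot\|$ to collapse that projection to the planar projection onto $\EuScript{R}$. The differences are in execution. For the key identity you rederive it from $(\widetilde{\varphi})^*=\iota_{\EuScript{C}}$ and Moreau's decomposition, where the paper simply cites \cite[Remark~3.2]{Jmaa18}; both are fine. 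For \ref{p:1ii} you argue directly on the minimization defining the prox (partial minimization in $z$ at $z=0$, then a one-dimensional quadratic in $\tau$), whereas the paper identifies $\proj_{\EuScript{C}}(\sigma/\gamma,0)=(\phi(0),0)$ via the variational characterization \eqref{e:kolmogorov}; your route is more elementary and avoids the projection machinery entirely for that case, though one should note (as you implicitly do) that $\widetilde{\varphi}(0,\cdot)=\rec\varphi\geq 0=\rec\varphi(0)$ so the $\tau=0$ slice causes no trouble. For \ref{p:1iii} you justify that the $\HH$-component of the projection lies on $\RR x$ by equivariance of metric projection under the orthogonal maps $\Id\oplus R$ fixing $(\sigma/\gamma,x/\gamma)$ and preserving $\EuScript{C}$, together with the identification of their common fixed subspace; the paper instead exhibits the candidate $(\chi,\nu x/\|x\|)$ and verifies \eqref{e:kolmogorov} by a direct inner-product computation after restricting to $V=\RR\times\spa\{x\}$. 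Your symmetry argument is cleaner and more conceptual; the paper's is more self-contained in that it never needs the fixed-subspace computation. Both correctly use the evenness of $\phi^*$ at the step where $D\cap V$ is identified with $\EuScript{R}$.
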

\begin{proof}
The properties of $\EuScript{R}$ follow from the fact that
$\phi^*\in\Gamma_0(\RR)$. Now, let us recall from 
\cite[Remark~3.2]{Jmaa18} that
\begin{equation}
\label{e:C}
\EuScript{C}=
\menge{(\mu,u)\in\RR\oplus\HH}{\mu+\varphi^*(u)\leq 0}
\end{equation}
and that
\begin{equation}
\label{e:duhast}
\prox_{\gamma\widetilde{\varphi}}(\sigma,x)=(\sigma,x)-
\gamma\proj_{\EuScript{C}}\big(\sigma/\gamma,x/\gamma\big).
\end{equation}
In addition, \cite[Example~13.8]{Livre1} states that 
\begin{equation}
\label{e:C2}
\varphi^*=\phi^*\circ\|\cdot\|.
\end{equation}

\ref{p:1i}: 
This follows from \eqref{e:C2} and Lemma~\ref{l:1}\ref{l:1i}.

\ref{p:1ii}:
Let us show that
$\proj_{\EuScript{C}}(\sigma/\gamma,0)=
(\phi(0),0)$, which will establish the claim by virtue
of \eqref{e:duhast}. Since $\phi$ is an even function in 
$\Gamma_0(\RR)$, 
$\phi(0)=\inf\phi(\RR)=-\phi^*(0)$. Hence
$\phi(0)+\varphi^*(0)=\phi(0)+\phi^*(0)=0$ and 
$(\phi(0),0)\in\EuScript{C}$. Now fix 
$(\eta,y)\in\EuScript{C}$. Then, since $\varphi$ is an even
function in $\Gamma_0(\HH)$,
$\eta\leq-\varphi^*(y)\leq-\inf\varphi^*(\HH)
=-\varphi^*(0)=-\phi^*(0)=\phi(0)$ and, 
since $\sigma>\gamma\phi(0)$, we get 
\begin{equation}
\scal{(\eta,y)-(\phi(0),0)}{(\sigma/\gamma,0)-(\phi(0),0)}=
\big(\eta-\phi(0)\big)\big(\sigma/\gamma-\phi(0)\big)\leq 0.
\end{equation}
Altogether, \eqref{e:kolmogorov} asserts that 
$\proj_{\EuScript{C}}(\sigma/\gamma,0)=(\phi(0),0)$.

\ref{p:1iii}: 
In view of \eqref{e:duhast}, it is enough to show that
$\proj_{\EuScript{C}}(\sigma/\gamma,x/\gamma)=
(\chi,\nu x/\|x\|)$.
Since $(\chi,\nu)\in\EuScript{R}$, \eqref{e:C2} yields 
$\chi+\varphi^*(\nu x/\|x\|)=\chi+\phi^*(\nu)\leq 0$ and, 
therefore, $(\chi,\nu x/\|x\|)\in\EuScript{C}$. On the other hand,
we infer from \eqref{e:C2} that $\EuScript{C}\subset\RR\oplus\HH$ 
is radially symmetric in the $\HH$-direction. As a result, 
$\proj_{\EuScript{C}}\big(\sigma/\gamma,x/\gamma\big)\in
V=\RR\times\spa\{x\}$ and therefore 
$\proj_{\EuScript{C}}\big(\sigma/\gamma,x/\gamma\big)=
\proj_{V\cap\EuScript{C}}\big(\sigma/\gamma,x/\gamma\big)$
\cite[Proposition~29.5]{Livre1}.
Now fix $(\eta,y)\in V\cap\EuScript{C}$.
Then $(\eta,\pm\|y\|)\in\EuScript{R}$ and \eqref{e:kolmogorov}
yields
\begin{equation}
(\eta-\chi)(\sigma/\gamma-\chi)+
(\pm\|y\|-\nu)(\|x\|/\gamma-\nu)=\scal{(\eta-\chi,\pm\|y\|-\nu)}
{(\sigma/\gamma-\chi,\|x\|/\gamma-\nu)}_{\RR^2}\leq 0.
\end{equation}
Hence, since $y=\pm\|y\|x/\|x\|$, 
\begin{align}
&\hskip -4mm
\scal{(\eta,y)-(\chi,\nu x/\|x\|)}
{(\sigma/\gamma,x/\gamma)-(\chi,\nu x/\|x\|)}_{\RR\oplus\HH}
\nonumber\\
&=(\eta-\chi)(\sigma/\gamma-\chi)+
\scal{y-\nu x/\|x\|}{x/\gamma-\nu x/\|x\|}\nonumber\\
&=(\eta-\chi)(\sigma/\gamma-\chi)+
\scal{\pm\|y\|x/\|x\|-\nu x/\|x\|}{\|x\|x/(\gamma\|x\|)-
\nu x/\|x\|}\nonumber\\
&=(\eta-\chi)(\sigma/\gamma-\chi)+
(\pm\|y\|-\nu)(\|x\|/\gamma-\nu)\scal{x}{x}/\|x\|^2\nonumber\\
&=(\eta-\chi)(\sigma/\gamma-\chi)+
(\pm\|y\|-\nu)(\|x\|/\gamma-\nu)\nonumber\\
&\leq 0.
\end{align}
Altogether, we derive from \eqref{e:kolmogorov} that 
$(\chi,\nu x/\|x\|)
=\proj_{V\cap\EuScript{C}}\big(\sigma/\gamma,x/\gamma\big)=
\proj_{\EuScript{C}}\big(\sigma/\gamma,x/\gamma\big)$.
\end{proof}

\begin{figure}
\centering
\scalebox{0.50} 
{
\hskip -1mm
\begin{pspicture}(-5,-2)(5,6.8) 
\psline[linewidth=0.04cm,arrowsize=0.2cm,linestyle=solid]{->}%
(-4,0)(3.9,0)
\psline[linewidth=0.04cm,arrowsize=0.2cm,linestyle=solid]{->}%
(0,-1.8)(0,5.8)
\psplot[plotpoints=800,algebraic,linewidth=0.06cm,linecolor=blue]%
{-1.0}{1.0}{1+abs(x)}
\psplot[plotpoints=800,algebraic,linewidth=0.06cm,linecolor=blue]%
{1.0}{2.9}{1+(x^2+1)/2}
\psplot[plotpoints=800,algebraic,linewidth=0.06cm,linecolor=blue]%
{-2.9}{-1.0}{1+(x^2+1)/2}
\rput(4.8,0){\LARGE$x\in\HH$}
\rput(0,6.3){\LARGE$\varphi(x)$}
\rput(1.0,-0.45){\LARGE$\rho$}
\rput(-1.0,-0.0){\small${|}$}
\rput(1.0,-0.0){\small${|}$}
\rput(0.0,1.0){\LARGE${-}$}
\rput(0.3,0.8){\LARGE$\alpha$}
\rput(-1.0,-0.45){\LARGE$-\rho$}
\end{pspicture} 
\hskip 8mm
\begin{pspicture}(-5,-2)(5,6.8) 
\psline[linewidth=0.04cm,arrowsize=0.2cm,linestyle=solid]{->}%
(-4,0)(3.9,0)
\psline[linewidth=0.04cm,arrowsize=0.2cm,linestyle=solid]{->}%
(0,-1.8)(0,5.8)
\psplot[plotpoints=800,algebraic,linewidth=0.06cm,linecolor=red]%
{-1.0}{1.0}{-1}
\psplot[plotpoints=800,algebraic,linewidth=0.06cm,linecolor=red]%
{1.0}{3.8}{-1+abs(x-1)+(x-1)^2/2}
\psplot[plotpoints=800,algebraic,linewidth=0.06cm,linecolor=red]%
{-3.8}{-1.0}{-1+abs(x+1)+(x+1)^2/2}
\rput(4.8,0){\LARGE${u}\in\HH$}
\rput(0,6.3){\LARGE$\varphi^*(u)$}
\rput(1.0,-0.45){\LARGE$\kappa$}
\rput(-1.0,-0.0){\small${|}$}
\rput(1.0,-0.0){\small${|}$}
\rput(0.5,-1.3){\LARGE$-\alpha$}
\rput(-1.0,-0.45){\LARGE${-\kappa}$}
\rput(0.0,-1.0){\small${-}$}
\end{pspicture} 
\hskip 8mm
\begin{pspicture}(-5,-5)(6,5.1) 
\psline[linewidth=0.04cm,arrowsize=0.2cm,%
linecolor=dbrown](3.5,-2.5)(1,-1.0)
\psline[linewidth=0.04cm,arrowsize=0.2cm,linecolor=dbrown]%
(3.5,-1.0)(1,-1.0)
\psline[linewidth=0.04cm,arrowsize=0.2cm,%
linecolor=dbrown](3.5,2.5)(1,1.0)
\psline[linewidth=0.04cm,arrowsize=0.2cm,linecolor=dbrown]%
(3.5,1.0)(1,1.0)
\psline[linewidth=0.04cm,arrowsize=0.2cm,linecolor=dbrown]{->}%
(3.5,0.67)(1.0,0.67)
\psline[linewidth=0.04cm,arrowsize=0.2cm,linecolor=dbrown]{->}%
(3.5,0.33)(1.0,0.33)
\psline[linewidth=0.04cm,arrowsize=0.2cm,linecolor=dbrown]{->}%
(3.5,-0.67)(1.0,-0.67)
\psline[linewidth=0.04cm,arrowsize=0.2cm,linecolor=dbrown]{->}%
(3.5,-0.33)(1.0,-0.33)
\psline[linewidth=0.04cm,arrowsize=0.2cm,linestyle=solid]{->}%
(-4,0)(4.2,0)
\psline[linewidth=0.04cm,arrowsize=0.2cm,linestyle=solid]{->}%
(0,-4.5)(0,4.8)
\psline[linewidth=0.04cm,arrowsize=0.2cm,linecolor=dbrown]{->}%
(3.5,1.5)(1,1.0)
\psline[linewidth=0.04cm,arrowsize=0.2cm,linecolor=dbrown]{->}%
(3.5,2.0)(1,1.0)
\psline[linewidth=0.04cm,arrowsize=0.2cm,linecolor=dbrown]{->}%
(3.5,-1.5)(1,-1.0)
\psline[linewidth=0.04cm,arrowsize=0.2cm,linecolor=dbrown]{->}%
(3.5,-2.0)(1,-1.0)
\psline[linewidth=0.04cm,arrowsize=0.2cm,linecolor=dbrown,%
linestyle=solid]{->}(0.6,-4.8)(-0.6,-2.08)
\psline[linewidth=0.04cm,arrowsize=0.2cm,linecolor=dbrown,%
linestyle=solid]{->}(0.55,4.6)(-0.6,2.08)
\psline[linewidth=0.04cm,arrowsize=0.2cm,linecolor=dbrown,%
linestyle=solid]{->}(-1.1,-5.0)(-2.1,-2.71)
\psline[linewidth=0.04cm,arrowsize=0.2cm,linecolor=dbrown,%
linestyle=solid]{->}(-1.1,5.0)(-2.1,2.71)
\psline[linewidth=0.04cm,arrowsize=0.2cm,linecolor=dbrown,%
linestyle=solid]{->}(-2.94,-5.2)(-3.65,-3.23)
\psline[linewidth=0.04cm,arrowsize=0.2cm,linecolor=dbrown,%
linestyle=solid]{->}(-2.94,5.2)(-3.65,3.23)
\psline[linewidth=0.04cm,arrowsize=0.2cm,linecolor=dbrown,%
linestyle=solid]{->}(3.0,3.8)(0.8,1.2)
\psline[linewidth=0.04cm,arrowsize=0.2cm,linecolor=dbrown,%
linestyle=solid]{->}(3.0,-3.8)(0.8,-1.2)
\psline[linewidth=0.06cm,linecolor=red](1,-1)(1,1)
\psplot[plotpoints=800,algebraic,linewidth=0.06cm,linecolor=red]%
{-4.0}{1.0}{(-2*(x-1)+1)^(1/2)}
\psplot[plotpoints=800,algebraic,linewidth=0.06cm,linecolor=red]%
{-4.0}{1.0}{-(-2*(x-1)+1)^(1/2)}
\rput(5.0,0){\LARGE$\mu\in\RR$}
\rput(0,5.15){\LARGE$u\in\HH$}
\rput(0.7,0.3){\LARGE$\alpha$}
\rput(1.0,-0.0){\small${|}$}
\rput(0.0,1.0){\large${-}$}
\rput(0.0,-1.0){\large${-}$}
\rput(-0.4,1.0){\LARGE$\kappa$}
\rput(-0.59,-1.0){\LARGE${-\kappa}$}
\rput(-2.3,1.4){\red\LARGE$\EuScript{C}$}
\end{pspicture} 
}
\caption{Geometry of the computation of
$\prox_{\widetilde{\varphi}}$ in \eqref{e:duhast}.
Left: original function $\varphi$.
Center: conjugate of $\varphi$.
Right: action of the projection operator $\proj_{\EuScript{C}}$
onto the set ${\EuScript{C}}$ of \eqref{e:C}. The proximity 
operator of ${\widetilde{\varphi}}$ is 
$\Id-\proj_{\EuScript{C}}$. In the specific
example depicted here, $\HH=\RR$ and $\varphi$ is the 
Berhu function of \eqref{e:owen}.}
\label{fig:1}
\end{figure}

\subsection{Examples}

We provide several examples that are relevant to the statistical
problems we have in sight. 

\begin{example}[generalized scaled lasso function]
{\rm \cite[Example~3.7]{Jmaa18}}
\label{ex:sl}
Let $\alpha\in\RPP$, $\gamma\in\RPP$, $\kappa\in\RPP$, 
$q\in\left]1,\pinf\right[$, $\sigma\in\RR$, and $x\in\HH$. Set 
$\varphi=\alpha+\|\cdot\|^q/\kappa\colon\HH\to\RR$ and 
$q^*=q/(q-1)$. Then 
\begin{equation}
\label{e:jmaa18-2}
\widetilde{\varphi}(\sigma,x)=
\begin{cases}
\alpha\sigma+\Frac{\|x\|^q}{\kappa\sigma^{q-1}},
&\text{if}\;\;\sigma>0; \\[3mm]
0,&\text{if}\;\;x=0\;\;\text{and}\;\;\sigma=0;\\
\pinf, &\text{otherwise.}
\end{cases}
\end{equation}
Now set $\rho=(\kappa/q)^{q^*-1}$. If
$q^*\gamma^{q^*-1}\sigma+\rho\|x\|^{q^*}>q^*\gamma^{q^*}\alpha$ 
and $x\neq 0$, let $t$ be the unique 
solution in $\RPP$ to the equation
\begin{equation}
\label{e:nyc2016-06-10f}
t^{2q^*-1}+\frac{q^*(\sigma-\gamma\alpha)}{\gamma\rho}t^{q^*-1}
+\frac{q^*}{\rho^2}t-\frac{q^*\|x\|}{\gamma\rho^2}=0.
\end{equation}
Set $p=tx/\|x\|$ if $x\neq 0$, and $p=0$ if $x=0$.
Then (note \cite[Eq.~(3.47)]{Jmaa18} is incorrect when
$\alpha\neq 0$)
\begin{equation}
\label{e:nyc2016-06-10b}
\prox_{\gamma\widetilde{\varphi}}(\sigma,x)=
\begin{cases}
\Big(\sigma+\gamma\big(\rho t^{q^*}/q^*-\alpha\big),
x-\gamma p\Big),
&\text{if}\;\;q^*\gamma^{q^*-1}\sigma+\rho\|x\|^{q^*}
>q^*\gamma^{q^*}\alpha;\\
\big(0,0\big),&\text{if}\;\;q^*\gamma^{q^*-1}\sigma+
\rho\|x\|^{q^*}\leq q^*\gamma^{q^*}\alpha.
\end{cases}
\end{equation}
\end{example}

Given $\rho\in\RPP$, the classical Huber function is defined as
\cite{Hube64}
\begin{equation}
\label{e:huber}
\mathsf{h}_\rho\colon\RR\to\RR\colon \xi\mapsto
\begin{cases}
\rho|\xi|-\Frac{\rho^{2}}{2},
&\text{if}\;\;|\xi|>\rho;\\[4mm]
\Frac{|\xi|^2}{2},&\text{if}\;\;|\xi|\leq\rho,
\end{cases}
\end{equation}
and it is known as the Huber function. Below, we study the
perspective of a generalization of it.

\begin{example}[generalized Huber function]
\label{ex:gh}
Let $\alpha$, $\gamma$, and $\rho$ be in $\RPP$, let 
$q\in\left]1,\pinf\right[$, and set $q^*=q/(q-1)$. Define
\begin{equation}
\label{e:gh}
\varphi\colon\HH\to\RR\colon x\mapsto
\begin{cases}
\alpha-\Frac{\rho^{q^*}}{q^*}+\rho\|x\|,
&\text{if}\;\;\|x\|>\rho^{q^*/q};\\[4mm]
\alpha+\Frac{\|x\|^q}{q},&\text{if}\;\;\|x\|\leq\rho^{q^*/q}.
\end{cases}
\end{equation}
Let $\sigma\in\RR$ and $x\in\HH$. Then 
\begin{equation}
\label{e:hb1}
\widetilde{\varphi}(\sigma,x)=
\begin{cases}
\bigg(\alpha-\Frac{\rho^{q^*}}{q^*}\bigg)
\sigma+\rho\|x\|,&\text{if}\;\;\sigma>0\;\;\text{and}\;\;
\|x\|>\sigma\rho^{q^*/q};\\[4mm]
\alpha\sigma+\dfrac{\|x\|^q}{q\sigma^{q-1}},
&\text{if}\;\;\sigma>0\;\;\text{and}\;\;
\|x\|\leq\sigma\rho^{q^*/q};\\[3mm]
\rho\|x\|,&\text{if}\;\;\sigma=0;\\
\pinf,&\text{if}\;\;\sigma<0.
\end{cases}
\end{equation}
In addition, the following hold:
\begin{enumerate}
\item
\label{ex:ghi}
Suppose that $\|x\|\leq\gamma\rho$ and 
$\|x\|^{q^*}\leq\gamma^{q^*}q^*(\alpha-\sigma/\gamma)$. Then 
$\prox_{\gamma\widetilde{\varphi}}(\sigma,x)=(0,0)$.
\item
\label{ex:ghii}
Suppose that $\sigma\leq\gamma(\alpha-\rho^{q^*}/q^*)$ and 
$\|x\|>\gamma\rho$. Then 
\begin{equation}
\label{e:gh3}
\prox_{\gamma\widetilde{\varphi}}(\sigma,x)=
\Bigg(0,\bigg(1-\dfrac{\gamma\rho}{\|x\|}\bigg)x\Bigg).
\end{equation}
\item
\label{ex:ghiii}
Suppose that $\sigma>\gamma(\alpha-\rho^{q^*}/q^*)$ and 
$\|x\|\geq\gamma\rho^{q^*-1}(\sigma/\gamma+\rho^{2-q^*}+
\rho^{q^*}/q^*-\alpha)$. Then 
\begin{equation}
\label{e:gh4}
\prox_{\gamma\widetilde{\varphi}}(\sigma,x)=
\Bigg(\sigma+\gamma\bigg(\dfrac{\rho^{q^*}}{q^*}-\alpha\bigg),
\bigg(1-\dfrac{\gamma\rho}{\|x\|}\bigg)x\Bigg).
\end{equation}
\item
\label{ex:ghiv}
Suppose that
$\|x\|^{q^*}>q^*\gamma^{q^*}(\alpha-\sigma/\gamma)$ and 
$\|x\|<\gamma\rho^{q^*-1}(\sigma/\gamma+\rho^{2-q^*}+
\rho^{q^*}/q^*-\alpha)$. If $x\neq 0$, let $t$ be the unique
solution in $\RPP$ to the equation
\begin{equation}
\label{e:hb11}
\gamma t^{2q^*-1}+q^*(\sigma-\gamma\alpha)t^{q^*-1}
+\gamma q^*t-q^*\|x\|=0.
\end{equation}
Set $p=tx/\|x\|$ if $x\neq 0$, and $p=0$ if $x=0$.
Then 
\begin{equation}
\label{e:gh12}
\prox_{\gamma\widetilde{\varphi}}(\sigma,x)=
\begin{cases}
\big(\sigma+\gamma(t^{q^*}/q^*-\alpha),x-\gamma p\big),
&\text{if}\;\;q^*\gamma^{q^*-1}\sigma+\|x\|^{q^*}
>q^*\gamma^{q^*}\alpha;\\
\big(0,0\big),&\text{if}\;\;q^*\gamma^{q^*-1}\sigma+
\|x\|^{q^*}\leq q^*\gamma^{q^*}\alpha.
\end{cases}
\end{equation}
\end{enumerate}
\end{example}
\begin{proof}
We derive \eqref{e:hb1} from \eqref{e:gh},
\eqref{e:perspective}, and the fact that
$\rec\varphi=\rec(\rho\|\cdot\|)=\rho\|\cdot\|$. 
Now set 
\begin{equation}
\label{e:gh1}
\phi\colon\RR\to\RR\colon\xi\mapsto
\begin{cases}
\alpha-\Frac{\rho^{q^*}}{q^*}+\rho|\xi|,
&\text{if}\;\;|\xi|>\rho^{q^*/q};\\[4mm]
\alpha+\Frac{|\xi|^q}{q},&\text{if}\;\;|\xi|\leq\rho^{q^*/q}.
\end{cases}
\end{equation}
Then $\phi=(\rho|\cdot|)\infconv(|\cdot|^q/q)+\alpha$ is convex 
and even, and $\varphi=\phi\circ\|\cdot\|$. We derive from 
\cite[Proposition~13.24(i) and Example~13.2(i)]{Livre1} that
\begin{equation}
\label{e:gh5}
\phi^*=\Bigg(\big(\rho|\cdot|\big)\infconv
\bigg(\dfrac{|\cdot|^q}{q}\bigg)\Bigg)^*-\alpha
=\iota_{[-\rho,\rho]}+\dfrac{|\cdot|^{q^*}}{q^*}-\alpha.
\end{equation}
In turn, \eqref{e:gh5} and \eqref{e:R} yield
\begin{equation}
\label{e:R1}
\EuScript{R}=\EuScript{R}_1\cap\EuScript{R}_2,
\quad\text{where}\quad
\EuScript{R}_1=\RR\times[-\rho,\rho]\quad\text{and}\quad
\EuScript{R}_2=\menge{(\chi,\nu)\in\RR^2}
{|\nu|^{q^*}\leq q^*(\alpha-\chi)}.
\end{equation}
Now set 
$(\chi,\nu)=\proj_{\EuScript{R}}(\sigma/\gamma,\|x\|/\gamma)$.

\ref{ex:ghi}: This follows from
Proposition~\ref{p:1}\ref{p:1i} and \eqref{e:gh5}.

\ref{ex:ghii}: 
Since $\sigma/\gamma\leq\alpha-\rho^{q^*}/q^*$, we have
$(\chi,\nu)=\proj_{\EuScript{R}_1}(\sigma/\gamma,\|x\|/\gamma)
=(\sigma/\gamma,\proj_{[-\rho,\rho]}(\|x\|/\gamma))$.
Thus, since $\|x\|/\gamma>\rho$, $(\chi,\nu)=(\sigma/\gamma,\rho)$
and \eqref{e:gh3} follows from Proposition~\ref{p:1}\ref{p:1iii}.

\ref{ex:ghiii}:
The point $\Pi=(\alpha-\rho^{q^*}/q^*,\rho)$ is in the 
intersection of the boundaries of $\EuScript{R}_1$ and 
$\EuScript{R}_2$.
Therefore, the normal cone to $\EuScript{R}$ at $\Pi$ is
generated by outer normals $n_1$ to $\EuScript{R}_1$ and $n_2$ to 
$\EuScript{R}_2$ at $\Pi$. A tangent vector to $\EuScript{R}_2$ 
at $\Pi$ is 
$t(\Pi)=(-(|\cdot|^{q^*}/q^*)'(\rho),1)=(-\rho^{q^*-1},1)$.
We can take $n_1=(0,1)$ and $n_2=(1,\rho^{q^*-1})\perp t(\Pi)$.
Thus, the set of points which have projection $\Pi$ onto
$\EuScript{R}$ is 
\begin{align}
\Pi+N_{\EuScript{R}}\Pi
&=\Pi+\cone(n_1,n_2)\nonumber\\
&=\big(\alpha-\rho^{q^*}/q^*,\rho\big)+
\menge{(\tau,\xi)\in\RR\times\RR}
{\tau\geq 0\;\;\text{and}\;\xi\geq\rho^{q^*-1}\tau}\nonumber\\
&=\menge{(\tau,\xi)\in\RR\times\RR}
{\tau\geq\alpha-\rho^{q^*}/q^*\;\;\text{and}\;
\xi-\rho\geq\rho^{q^*-1}(\tau-\alpha+\rho^{q^*}/q^*)}\nonumber\\
&=\menge{(\tau,\xi)\in\RR\times\RR}
{\tau\geq\alpha-\rho^{q^*}/q^*\;\;\text{and}\;
\xi\geq\rho+\rho^{q^*-1}(\tau-\alpha)+\rho^{2q^*-1}/q^*)},
\end{align}
and therefore
\begin{equation}
(\chi,\nu)=\big(\alpha-\rho^{q^*}/q^*,\rho\big)
\quad\Leftrightarrow\quad
\begin{cases}
\sigma\geq\gamma\big(\alpha-\rho^{q^*}/q^*\big)\\
\|x\|\geq\gamma\big(\rho+\rho^{q^*-1}(\sigma/\gamma-\alpha)+
\rho^{2q^*-1}/q^*)\big).
\end{cases}
\end{equation}
In view of Proposition~\ref{p:1}\ref{p:1iii}, this yields
\eqref{e:gh4}.

\ref{ex:ghiv}: Here 
$(\sigma/\gamma,\|x\|/\gamma)\notin\EuScript{R}_2$ and 
$(\chi,\nu)=\proj_{\EuScript{R}_2}(\sigma/\gamma,\|x\|/\gamma)$.
Since 
\begin{equation}
\EuScript{R}_2=\menge{(\chi,\nu)\in\RR^2}
{\chi+\big(\alpha+|\cdot|^{q}/q\big)^*(\nu)\leq 0},
\end{equation}
the expression of $\prox_{\gamma\widetilde{\varphi}}(\sigma,x)$ is
computed exactly as though we were dealing with
the generalized scaled lasso function $\alpha+\|\cdot\|^{q}/q$ 
of Example~\ref{ex:sl} with $\kappa=q$ and the result is given in
\eqref{e:nyc2016-06-10b}.
\end{proof}

\begin{example}[generalized Berhu function]
\label{ex:bh}
Let $\alpha$, $\gamma$, $\rho$, and $\kappa$ be in $\RPP$, let 
$q\in\left]1,\pinf\right[$, and set $C=B(0;\rho)$. Define
$\varphi\colon\HH\to\RR$ by
\begin{equation}
\label{e:bh0}
\varphi=\alpha+\kappa\|\cdot\|+\frac{d^q_C}{q\rho^{q^*-1}},
\quad\text{where}\quad q^*=\dfrac{q}{q-1},
\end{equation}
and let $\sigma\in\RR$ and $x\in\HH$. Then 
\begin{equation}
\label{e:bh1}
\widetilde{\varphi}(\sigma,x)=
\begin{cases}
\alpha\sigma+\kappa\|x\|+\dfrac{\sigma}{q\rho^{q^*-1}}\bigg(
\dfrac{\|x\|}{\sigma}-\rho\bigg)^q,
&\text{if}\;\;\sigma>0\;\;\text{and}\;\;
\|x\|>\rho\sigma;\\
\alpha\sigma+\kappa\|x\|,
&\text{if}\;\;\sigma>0\;\;\text{and}\;\;
\|x\|\leq\rho\sigma;\\
0,&\text{if}\;\;\sigma=0\;\;\text{and}\;\;x=0;\\
\pinf,&\text{otherwise.}
\end{cases}
\end{equation}
Furthermore, set $\Delta\colon\RR\to\RR\colon\mu\mapsto
\text{max}(|\mu|-\kappa,0)+\text{max}^{q^*}
(|\mu|-\kappa,0)/q^*$.
Then the following hold:
\begin{enumerate}
\item
\label{ex:bhi}
Suppose that 
$\Delta(\|x\|/\gamma)\leq(\alpha-\sigma/\gamma)/\rho$.
Then $\prox_{\gamma\widetilde{\varphi}}(\sigma,x)=(0,0)$.
\item
\label{ex:bhiv}
Suppose that $\Delta(\|x\|/\gamma)>(\alpha-\sigma/\gamma)/\rho$
and that $\|x\|>\gamma\kappa+\rho(\sigma-\gamma\alpha)$.
If $x\neq 0$, let $t$ be the unique solution in
$\left]\kappa,\pinf\right[$ to the polynomial equation
\begin{equation}
\label{e:bh22}
\rho\bigg(\sigma-\gamma\alpha+\gamma\rho\bigg(t-\kappa+
\dfrac{(t-\kappa)^{q^*}}{q^*}\bigg)
\bigg)\Big(1+(t-\kappa)^{q^*-1}\Big)+\gamma t-\|x\|=0.
\end{equation}
Set $p=tx/\|x\|$ if $x\neq 0$, and set $t=0$ and $p=0$ if $x=0$.
Then $\prox_{\gamma\widetilde{\varphi}}(\sigma,x)=
\big(\sigma-\gamma\alpha+\gamma\rho\Delta(t),x-\gamma p\big)$.
\item
\label{ex:bhiii}
Suppose that 
$\gamma\kappa\leq\|x\|\leq\gamma\kappa+\rho(\sigma-\gamma\alpha)$.
Then 
\begin{equation}
\prox_{\gamma\widetilde{\varphi}}(\sigma,x)=
\big(\sigma-\gamma\alpha,(1-\gamma\kappa/\|x\|)x\big).
\end{equation}
\item
\label{ex:bhii}
Suppose that $\sigma>\gamma\alpha$ and $\|x\|<\gamma\kappa$. 
Then $\prox_{\gamma\widetilde{\varphi}}(\sigma,x)
=(\sigma-\gamma\alpha,0)$.
\end{enumerate}
\end{example}
\begin{proof}
The geometry underlying the proof is that depicted in 
Fig.~\ref{fig:1}, where $q=2$.
Set $R=[-\rho,\rho]$, $D=[-\kappa,\kappa]$,
$\phi=\alpha+\kappa|\cdot|+d^q_{[-\rho,\rho]}/(q\rho^{q/q^*})$,
$\theta\colon\RR\to\RR\colon t\mapsto|t|^{q}/(q\rho^{q/q^*})$, and
$\psi\colon\RR\to\RR\colon t\mapsto\rho(|t|+|t|^{q^*}/{q^*})$.
Then $\phi\colon\RR\to\RR$ is convex and even, and it follows
from \eqref{e:bh0} and \cite[Example~13.8]{Livre1} that 
\begin{equation}
\label{e:3g8h}
\varphi=\phi\circ\|\cdot\|\quad\text{and}\quad 
\varphi^*=\phi^*\circ\|\cdot\|.
\end{equation}
Furthermore,
$\sigma_{D}=\kappa|\cdot|$ and we derive from 
\cite[Examples~13.26 and 13.2(i)]{Livre1} that
\begin{align}
\label{e:ncw1}
\phi^*
&=\big(\sigma_D+\theta\circ d_R\big)^*-\alpha\nonumber\\
&=\sigma^*_D\infconv\big(\theta\circ d_R\big)^*-\alpha\nonumber\\
&=\iota_D\infconv\big(\sigma_R+\theta^*\circ|\cdot|\big)-\alpha
\nonumber\\
&=\iota_D\infconv\big(\rho|\cdot|+
\theta^*\circ|\cdot|\big)-\alpha\nonumber\\
&=\iota_D\infconv\big(\psi\circ|\cdot|\big)-\alpha\nonumber\\
&=\big(\psi\circ d_D\big)-\alpha\nonumber\\
&=\rho\bigg(d_D+\frac{d_D^{q^*}}{q^*}\bigg)-\alpha.
\end{align}
In turn, \cite[Example~17.33]{Livre1} yields
\begin{equation}
\label{e:bh7}
(\forall\nu\in\RR)\quad\partial\phi^*(\nu)=
\begin{cases}
\left\{\rho\bigg(\dfrac{1+d_D^{q^*-1}(\nu)}{d_D(\nu)}\bigg)
(\nu-\proj_D\nu)\right\},
&\text{if}\;\;\nu\notin D;\\[3mm]
N_D\nu\cap [-\rho,\rho],&\text{if}\;\;\nu\in D.
\end{cases}
\end{equation}
However, since $D=[-\kappa,\kappa]$, we have
$d_D\colon\nu\mapsto\max(|\nu|-\kappa,0)$. 
Therefore, \eqref{e:ncw1} implies that
\begin{equation}
\label{e:bh55}
(\forall\nu\in\RR)\quad\phi^*(\nu)=\rho\Delta(\nu)-\alpha=
\begin{cases}
\rho\big(|\nu|-\kappa+(|\nu|-\kappa)^{q^*}/q^*\big)-\alpha,
&\text{if}\;\;|\nu|>\kappa;\\
-\alpha,&\text{if}\;\;|\nu|\leq\kappa
\end{cases}
\end{equation}
and
\begin{equation}
\label{e:bh9}
(\forall\nu\in\RR)\quad\partial\phi^*(\nu)=
\begin{cases}
\left\{\rho\Big(1+(|\nu|-\kappa)^{q^*-1}\Big)
\sign(\nu)\right\},&\text{if}\;\;|\nu|>\kappa;\\[3mm]
[0,\rho],&\text{if}\;\;\nu=\kappa;\\
[-\rho,0],&\text{if}\;\;\nu=-\kappa;\\
\{0\},&\text{if}\;\;|\nu|<\kappa.
\end{cases}
\end{equation}
On the other hand, \eqref{e:bh55} and \eqref{e:R} yield
\begin{equation}
\label{e:R6}
\EuScript{R}=
\menge{(\chi,\nu)\in\RR^2}{\rho\Delta(\nu)\leq\alpha-\chi}.
\end{equation}
Now set $\Pi=(\alpha,\kappa)$ and
$(\chi,\nu)=\proj_{\EuScript{R}}(\sigma/\gamma,\|x\|/\gamma)$.
In view of \eqref{e:bh9}, the normal cone to $\epi\phi^*$ at
$(\kappa,-\alpha)$ is generated by the vectors $(\rho,-1)$ and 
$(0,-1)$. Hence, the normal cone to $\EuScript{R}$ at $\Pi$ is
generated by $n_1=(1,\rho)$ and $n_2=(1,0)$, that is 
\begin{equation}
N_{\EuScript{R}}\Pi=\menge{(\tau,\xi)\in\RR\times\RR}
{0\leq\xi\leq\rho\tau}.
\end{equation}
In turn, 
\begin{equation}
\label{e:kj85}
\proj_{\EuScript{R}}^{-1}\{\Pi\}=\Pi+N_{\EuScript{R}}\Pi=
\menge{(\tau,\xi)\in\RR^2}
{\kappa\leq\xi\leq\kappa+\rho(\tau-\alpha)}.
\end{equation}

\ref{ex:bhi}: It follows from the assumptions and \eqref{e:bh55} 
that $\sigma+\gamma\phi^*(\|x\|/\gamma)\leq 0$. In turn,
Proposition~\ref{p:1}\ref{p:1i} implies that
$\prox_{\gamma\widetilde{\varphi}}(\sigma,x)=(0,0)$.

\ref{ex:bhiv}: We have 
$(\sigma/\gamma,\|x\|/\gamma)\notin\EuScript{R}\supset
\left]\minf,\alpha\right]\times[-\kappa,\kappa]$ and 
$\|x\|/\gamma>\kappa+\rho(\sigma/\gamma-\alpha)$. Hence 
$|\nu|>\kappa$. Now set
$(\pi,p)=\proj_{\EuScript{C}}(\sigma/\gamma,x/\gamma)$. Then 
$\|p\|=|\nu|>\kappa$. Therefore, since it results from
\eqref{e:3g8h} and \eqref{e:ncw1} that 
$\dom\varphi^*=\dom(\phi^*\circ\|\cdot\|)=\HH$, 
Lemma~\ref{l:1}\ref{l:1ii},
\eqref{e:3g8h}, and \eqref{e:bh9} yield
\begin{align}
\label{e:bh20}
x
&=\gamma p+\big(\sigma+\gamma\varphi^*(p)\big)\nabla\varphi^*(p)
\nonumber\\
&=\gamma p+\big(\sigma+\gamma\phi^*(\|p\|)\big)
\nabla(\phi^*\circ\|\cdot\|)(p)
\nonumber\\
&=\Bigg(\gamma+\rho\bigg(\sigma-\gamma\alpha
+\gamma\rho\bigg(\|p\|-\kappa+
\dfrac{(\|p\|-\kappa)^{q^*}}{q^*}\bigg)
\bigg)\bigg(\dfrac{1+(\|p\|-\kappa)^{q^*-1}}
{\|p\|}\bigg)\Bigg)p.
\end{align}
Hence, 
\begin{equation}
\label{e:bh21}
p=\dfrac{1}{\gamma+\rho\bigg(\sigma-\gamma\alpha
+\gamma\rho\bigg(t-\kappa+
\dfrac{(t-\kappa)^{q^*}}{q^*}\bigg)\bigg)
\bigg(\dfrac{1+(t-\kappa)^{q^*-1}}{t}\bigg)}x,
\end{equation}
where $t=\|p\|$ is the unique solution in
$\left]\kappa,\pinf\right[$ to \eqref{e:bh22}, 
which is obtained by taking the norm of both sides of 
\eqref{e:bh20}. We then get the conclusion by invoking 
\eqref{e:ramm1}.

\ref{ex:bhiii}: In view of \eqref{e:kj85}, the assumptions imply 
that $(\sigma/\gamma,\|x\|/\gamma)\in\Pi+N_{\EuScript{R}}\Pi$
and therefore that $(\chi,\nu)=(\alpha,\kappa)$. Consequently,
Proposition~\ref{p:1}\ref{p:1iii} yields
$\prox_{\gamma\widetilde{\varphi}}(\sigma,x)=
\big(\sigma-\gamma\alpha,(1-\gamma\kappa/\|x\|)x\big)$.

\ref{ex:bhii}:
Set $H=\left]\minf,\alpha\right]\times\RR$. Then
$\EuScript{R}=\EuScript{R}\cap H$ and 
$(\sigma/\gamma,\|x\|/\gamma)\in
\left]\alpha,\pinf\right[\times[-\kappa,\kappa]$.
Hence, $(\chi,\nu)=\proj_H(\sigma/\gamma,\|x\|/\gamma)
=(\alpha,\|x\|/\gamma)$. In turn, 
we derive from Proposition~\ref{p:1}\ref{p:1iii} that
$\prox_{\gamma\widetilde{\varphi}}(\sigma,x)
=(\sigma-\gamma\alpha,0)$.
\end{proof}

\begin{example}[standard Berhu function]
\label{ex:berhu}
Let $\alpha$, $\gamma$, and $\rho$ be in $\RPP$. The standard
Berhu function of \cite{Owen07} with shift $\alpha$ is obtained
by setting $\HH=\RR$, $\kappa=1$, and $q=2$ in \eqref{e:bh0},
that is
\begin{equation}
\label{e:owen}
\mathsf{b}_\rho\colon\RR\to\RR\colon x\mapsto
\begin{cases}
\alpha+\dfrac{|x|^2+\rho^2}{2\rho},&\text{if}\;\:|x|>\rho;\\
\alpha+|x|,&\text{if}\;\:|x|\leq\rho.
\end{cases}
\end{equation}
Now let $\sigma\in\RR$ and $x\in\RR$. Then we derive 
from Example~\ref{ex:bh} that
\begin{equation}
\label{e:berhu}
\widetilde{\mathsf{b}}_\rho(\sigma,x)=
\begin{cases}
\alpha\sigma+\dfrac{|x|^2+\sigma^2\rho^2}{2\rho\sigma},
&\text{if}\;\;\sigma>0\;\;\text{and}\;\;
|x|>\sigma\rho;\\
\alpha\sigma+|x|,
&\text{if}\;\;\sigma>0\;\;\text{and}\;\;
|x|\leq\sigma\rho;\\
0,&\text{if}\;\;\sigma=0\;\;\text{and}\;\;x=0;\\
\pinf,&\text{otherwise,}
\end{cases}
\end{equation}
and that $\prox_{\gamma\widetilde{\mathsf{b}}_\rho}(\sigma,x)$ 
is given by
\begin{equation}
\begin{cases}
(0,0)&\text{if}\;\;
\text{max}(|x|^2-\gamma^2,0)\leq
2\gamma(\gamma\alpha-\sigma)/\rho;\\
(\sigma-\gamma\alpha,0)&\text{if}\;\;\sigma>\gamma\alpha
\;\;\text{and}\;\;|x|\leq\gamma;\\
\big(\sigma-\gamma\alpha,(1-\gamma/|x|)x\big)
&\text{if}\;\;\sigma>\gamma\alpha\;\;\text{and}\;\;
\gamma<|x|\leq\gamma+\rho(\sigma-\gamma\alpha);\\
\big(\sigma-\gamma\alpha+\gamma\rho(|p|^2-1)/2,x-\gamma p\big)
&\text{if}\;\;|x|>\gamma+\rho(\sigma-\gamma\alpha)
\;\;\text{and}\\
&\hskip 33mm |x|>\sqrt{\gamma^2+
2\gamma(\gamma\alpha-\sigma)/\rho},
\end{cases}
\end{equation}
with 
\begin{equation}
\label{e:nyc2017-07-23b}
p=\dfrac{1}{\gamma+
\rho\Big(\sigma-\gamma\alpha+\dfrac{\gamma\rho}{2}
\big(t^2-1\big)\Big)}\,x,
\end{equation}
where $t$ is the unique solution in $\left]1,\pinf\right[$ 
to the reduced third degree equation
\begin{equation}
\label{e:nyc2017-07-23c}
t^3+\bigg(\dfrac{2\big(\gamma+\rho(\sigma-\gamma\alpha)\big)}
{\gamma\rho^2}-1\bigg)t-\dfrac{2\|x\|}{\gamma\rho^2}=0,
\end{equation}
which can be solved explicitly via Cardano's formula.
\end{example}

\begin{example}[abstract Vapnik function]
\label{ex:v}
Let $\alpha$, $\varepsilon$, and $\gamma$ be in $\RPP$, and
define $\varphi\colon\HH\to\RR$ by
$\varphi=\alpha+\text{max}(\|\cdot\|-\varepsilon,0)$. Then 
\begin{equation}
\label{e:v1}
\widetilde{\varphi}\colon\HH\to\RX\colon(\sigma,x)\mapsto
\begin{cases}
\alpha\sigma+\text{max}(\|x\|-\varepsilon\sigma,0),
&\text{if}\;\;\sigma\geq 0;\\
\pinf,&\text{if}\;\;\sigma<0.
\end{cases}
\end{equation}
Now let $\sigma\in\RR$ and $x\in\HH$. Then the following hold:
\begin{enumerate}
\item
\label{ex:vi}
Suppose that $\sigma+\varepsilon\|x\|\leq\gamma\alpha$ and 
$\|x\|\leq\gamma$. Then 
$\prox_{\gamma\widetilde{\varphi}}(\sigma,x)=(0,0)$.
\item
\label{ex:vii}
Suppose that $\sigma\leq\gamma(\alpha-\varepsilon)$ and 
$\|x\|>\gamma$. Then 
\begin{equation}
\label{e:mg1}
\prox_{\gamma\widetilde{\varphi}}(\sigma,x)=
\Bigg(0,\bigg(1-\frac{\gamma}{\|x\|}\bigg)x\Bigg).
\end{equation}
\item
\label{ex:viii}
Suppose that $\sigma>\gamma(\alpha-\varepsilon)$ and $\|x\|\geq
\varepsilon\sigma+\gamma(1+\varepsilon(\varepsilon-\alpha))$. 
Then
\begin{equation}
\label{e:jk18}
\prox_{\gamma\widetilde{\varphi}}(\sigma,x)=
\Bigg(\sigma+\gamma(\varepsilon-\alpha),
\bigg(1-\frac{\gamma}{\|x\|}\bigg)x\Bigg).
\end{equation}
\item
\label{ex:viv}
Suppose that $\sigma+\varepsilon\|x\|>\gamma\alpha$ and 
$\varepsilon(\sigma-\gamma\alpha)<\|x\|<\varepsilon\sigma+
\gamma(1+\varepsilon(\varepsilon-\alpha))$. Then
\begin{equation}
\label{e:jk19}
\prox_{\gamma\widetilde{\varphi}}(\sigma,x)=
\frac{\sigma+\varepsilon\|x\|-\gamma\alpha}{1+\varepsilon^2}
\bigg(1,\dfrac{\varepsilon}{\|x\|}\,x\bigg).
\end{equation}
\item
\label{ex:vv}
Suppose that $\sigma\geq\gamma\alpha$ and
$\|x\|\leq\varepsilon(\sigma-\gamma\alpha)$. Then 
$\prox_{\gamma\widetilde{\varphi}}(\sigma,x)=
(\sigma-\gamma\alpha,x)$.
\end{enumerate}
\end{example}
\begin{proof}
We derive \eqref{e:v1} at once from \eqref{e:perspective}.
Set $\phi=\alpha+\text{max}(|\cdot|-\varepsilon,0)$. Then 
$\varphi=\phi\circ\|\cdot\|$ and 
$\phi=\alpha+d_{[-\varepsilon,\varepsilon]}=
\alpha+\iota_{[-\varepsilon,\varepsilon]}\infconv|\cdot|$. 
Therefore
\begin{equation}
\label{e:dh1}
\phi^*=\varepsilon|\cdot|+\iota_{[-1,1]}-\alpha.
\end{equation}
Thus, \eqref{e:R} yields
\begin{equation}
\label{e:dh2}
\EuScript{R}=\EuScript{R}_1\cap\EuScript{R}_2,
\quad\text{where}\quad
\EuScript{R}_1=\left]\minf,\alpha\right]\times[-1,1]
\quad\text{and}\quad
\EuScript{R}_2=\menge{(\chi,\nu)\in\RR^2}
{\varepsilon|\nu|\leq\alpha-\chi}.
\end{equation}
Now set 
$(\chi,\nu)=\proj_{\EuScript{R}}(\sigma/\gamma,\|x\|/\gamma)$.

\ref{ex:vi}: This follows from \eqref{e:dh1} and
Proposition~\ref{p:1}\ref{p:1i}.

\ref{ex:vii}: Since $\sigma/\gamma\leq\alpha-\varepsilon$ and
$\|x/\gamma\|>1$, it follows from \eqref{e:dh2} that
\begin{equation}
(\chi,\nu)
=\proj_{\EuScript{R}_1}\big(\sigma/\gamma,\|x\|/\gamma\big)
=\big(\sigma/\gamma,1\big).
\end{equation}
In turn, we derive \eqref{e:mg1} from 
Proposition~\ref{p:1}\ref{p:1iii}.

\ref{ex:viii}:
The point $\Pi=(\alpha-\varepsilon,1)$ lies in the intersection
of the boundaries of $\EuScript{R}_1$ and $\EuScript{R}_2$, which
are line segments.
Therefore, the normal cone to $\EuScript{R}$ at $\Pi$ is
generated by outer normals $n_1$ to $\EuScript{R}_1$ and $n_2$ to 
$\EuScript{R}_2$ at $\Pi$. A tangent vector to $\EuScript{R}_2$ at
$\Pi$ is $t(\Pi)=(-\varepsilon,1)$. Therefore we
take $n_1=(0,1)$ and $n_2=(1,\varepsilon)\perp t(\Pi)$.
Consequently, the set of points which have projection $\Pi$ onto
$\EuScript{R}$ is 
\begin{align}
\proj_{\EuScript{R}}^{-1}\{\Pi\}
&=\Pi+N_{\EuScript{R}}\Pi\nonumber\\
&=\Pi+\cone(n_1,n_2)\nonumber\\
&=\big(\alpha-\varepsilon,1\big)+
\menge{(\tau,\xi)\in\RR^2}
{\tau\geq 0\;\;\text{and}\;\xi\geq\varepsilon\tau}\nonumber\\
&=\menge{(\tau,\xi)\in\RR^2}
{\tau\geq\alpha-\varepsilon\;\;\text{and}\;
\xi\geq 1+\varepsilon(\tau-\alpha+\varepsilon)}\nonumber\\
&=\menge{(\tau,\xi)\in\RR^2}
{\tau\geq\alpha-\varepsilon\;\;\text{and}\;\xi\geq
\varepsilon\tau+1+\varepsilon(\varepsilon-\alpha)},
\end{align}
and it contains $(\sigma/\gamma,\|x\|/\gamma)$.
Hence
\begin{equation}
(\chi,\nu)=\big(\alpha-\varepsilon,1\big)
\quad\Leftrightarrow\quad
\begin{cases}
\sigma\geq\gamma(\alpha-\varepsilon)\\
\|x\|\geq\varepsilon\sigma+\gamma
\big(1+\varepsilon(\varepsilon-\alpha)\big).
\end{cases}
\end{equation}
We then use Proposition~\ref{p:1}\ref{p:1iii} to get 
\eqref{e:jk18}.

\ref{ex:viv}:
In this case, $(\chi,\nu)=\proj_{\EuScript{R}_2}
(\sigma/\gamma,\|x\|/\gamma)$. More precisely, $(\chi,\nu)$ is 
the projection of $(\sigma/\gamma,\|x\|/\gamma)$ onto the hyperplane
$\menge{(\tau,\xi)\in\RR^2}{\varepsilon\xi\leq\alpha-\tau}
=\menge{(\tau,\xi)\in\RR^2}{\scal{(\tau,\xi)}{n_2}\leq\alpha}$, 
where $n_2=(1,\varepsilon)$. Thus,
\begin{align}
(\chi,\nu)
&=\dfrac{1}{\gamma}(\sigma,\|x\|)+
\dfrac{\alpha-\scal{(\sigma,\|x\|)}{n_2}/\gamma}
{\|n_2\|^2}n_2\nonumber\\
&=\bigg(\dfrac{\sigma}{\gamma}+
\dfrac{\alpha-(\sigma+\varepsilon\|x\|)/\gamma}{1+\varepsilon^2},
\dfrac{\|x\|}{\gamma}+\varepsilon
\dfrac{\alpha-(\sigma+\varepsilon\|x\|)/\gamma}{1+\varepsilon^2}
\bigg),
\end{align}
and \eqref{e:jk19} follows from 
Proposition~\ref{p:1}\ref{p:1iii}.

\ref{ex:vv}:
Set $\Pi=(\alpha,0)$, $n_2=(1,\varepsilon)$, and 
$n_3=(1,-\varepsilon)$. The set of points which have projection
$\Pi$ onto
$\EuScript{R}$ is 
\begin{align}
\Pi+N_{\EuScript{R}}\Pi
&=\Pi+N_{\EuScript{R}_2}\Pi\nonumber\\
&=\Pi+\cone(n_2,n_3)\nonumber\\
&=\big(\alpha,0\big)+\menge{(\tau,\xi)\in\RR^2}
{\tau\geq 0\;\;\text{and}\;\xi\leq\varepsilon\tau}\nonumber\\
&=\menge{(\tau,\xi)\in\RR^2}{\tau\geq\alpha\;\;\text{and}\;
\xi\leq\varepsilon(\tau-\alpha)},
\end{align}
and it therefore contains $(\sigma/\gamma,\|x\|/\gamma)$.
In turn, $(\chi,\nu)=(\alpha,0)$ and the conclusion follows from 
Proposition~\ref{p:1}\ref{p:1iii}.
\end{proof}

\section{Optimization model and examples}
\label{sec:3}
Let us first recall that our data formation model is 
Model~\ref{m:1}. We now introduce the perspective M-estimation
model, which enables the estimation of the regression vector
$\overline{b}=(\overline{\beta}_k)_{1\leq k\leq p}\in\RR^p$ as 
well as scale vectors
$\overline{s}=(\overline{\sigma}_i)_{1\leq i\leq N}\in\RR^N$ and 
$\overline{t}=(\overline{\tau}_i)_{1\leq i\leq P}\in\RR^P$. 
If robust data fitting functions are used, the outlier vector in
Model~\ref{m:1} can be identified from the solution of
\eqref{e:prob0} below. 
For instance, if the Huber function is used for data fitting, 
one can estimate the mean shift vector $\overline{o}$ in 
\eqref{e:1} \cite{Anto07,Shey11}. 

The optimization problem under investigation is as follows.

\begin{problem}
\label{prob:1}
Let $N$ and $P$ be strictly positive integers,
let $\varsigma\in\Gamma_0(\RR^N)$, let $\varpi\in\Gamma_0(\RR^P)$,
let $\theta\in\Gamma_0(\RR^p)$, let $(n_i)_{1\leq i\leq N}$
be strictly positive integers such that $\sum_{i=1}^Nn_i=n$, and
let $(p_i)_{1\leq i\leq P}$ be strictly positive integers.
For every $i\in\{1,\ldots,N\}$, let 
$\varphi_i\in\Gamma_0(\RR^{n_i})$,
let $X_i\in\RR^{n_i\times p}$,
and let $y_i\in\RR^{n_i}$ be such that 
\begin{equation}
\label{e:blocks}
X=
\begin{bmatrix}
X_1\\
\vdots\\
X_N\\
\end{bmatrix}
\quad\text{and}\quad
y=
\begin{bmatrix}
y_1\\
\vdots\\
y_N\\
\end{bmatrix}.
\end{equation}
Finally, for every $i\in\{1,\ldots,P\}$, let 
$\psi_i\in\Gamma_0(\RR^{p_i})$, and 
let $L_i\in\RR^{p_i\times p}$.
The objective of perspective M-estimation is to 
\begin{equation}
\label{e:prob0}
\minimize{s\in\RR^N,\,t\in\RR^P,\,b\in\RR^p}
\varsigma(s)+\varpi(t)+\theta(b)
+{\Sum_{i=1}^{N}\widetilde{\varphi}_i
\big(\sigma_i,X_ib-y_i\big)+\Sum_{i=1}^{P}
\widetilde{\psi}_i\big(\tau_i,L_ib\big)}.
\end{equation}
\end{problem}

\begin{remark}
\label{r:prob1}
Let us make a few observations about Problem~\ref{prob:1}.
\begin{enumerate}
\item
In \eqref{e:prob0}, $N+P$ perspective functions 
$(\widetilde{\varphi_i})_{1\leq i\leq N}$ and
$(\widetilde{\psi_i})_{1\leq i\leq P}$ are used to penalize affine
transformations $(X_ib-y_i)_{1\leq i\leq N}$ and
$(L_ib)_{1\leq i\leq P}$ of $b$. The operators 
$(L_i)_{1\leq i\leq P}$ can, for instance, select a single
coordinate, or blocks of coordinates (as in the group lasso
penalty), or can model finite difference operators.
Constraints on the scale 
variables $(\sigma_i)_{1\leq i\leq N}$ and $(\tau_i)_{1\leq i\leq P}$
of the perspective functions can be enforced via the functions
$\varsigma$ and $\varpi$. 
\item
It is also possible to use ``scaleless'' non-perspective functions
of the transformations $(X_ib-y_i)_{1\leq i\leq N}$ and
$(L_ib)_{1\leq i\leq P}$. For instance, given
$i\in\{1,\ldots,N\}$, the term ${\varphi}_i(X_ib-y_i)$ is obtained 
by using $\widetilde{\varphi}_i(\sigma_i,X_ib-y_i)$ and
imposing $\sigma_i=1$ via $\varsigma$.
\item
We attach individual scale variables to each of the functions
$(\widetilde{\varphi_i})_{1\leq i\leq N}$ and
$(\widetilde{\psi_i})_{1\leq i\leq P}$ for 
flexibility in the case of heteroscedastic models, but also 
for computational reasons. Indeed, the
proximal tools we are proposing in Sections~\ref{sec:4} and
\ref{sec:5} can handle separable functions better. For instance,
it is hard to process the function 
\begin{equation}
(\sigma,x_1,x_2)\mapsto\widetilde{\varphi}_1(\sigma,x_1)
+\widetilde{\varphi}_2(\sigma,x_2)
\end{equation}
via proximal tools, whereas the equivalent separable function with 
coupling of the scales 
\begin{multline}
(\sigma_1,\sigma_2,x_1,x_2)\mapsto\varsigma(\sigma_1,\sigma_2)+
\widetilde{\varphi}_1(\sigma_1,x_1)
+\widetilde{\varphi}_2(\sigma_2,x_2),\\
\quad\text{where}\quad
\varsigma(\sigma_1,\sigma_2)=
\begin{cases}
0,&\text{if}\;\;\sigma_1=\sigma_2;\\
\pinf,&\text{if}\;\;\sigma_1\neq\sigma_2,
\end{cases}
\end{multline}
will be much easier.
\end{enumerate}
\end{remark}

We now present some important instantiations of
Problem~\ref{prob:1}.

\begin{example}
\label{ex:1}
Consider the optimization problem
\begin{equation}
\label{e:ridge}
\minimize{b\in\RR^p}{\|Xb-y\|_q^q+\alpha_1\|b\|_1
+\alpha_2\|b\|_r^r},
\end{equation}
where $\alpha_1\in\RP$, $\alpha_2\in\RP$, $q\in\{1,2\}$,
and $r\in[1,2]$.
For $q=r=2$, $\alpha_1>0$, and $\alpha_2>0$, \eqref{e:ridge} 
is the elastic-net model of \cite{Zouz05}; in addition, if 
$\alpha_1=0$ and $\alpha_2>0$, we obtain the 
ridge regression model \cite{Hoer62} and, if $\alpha_1>0$ and
$\alpha_2=0$, we obtain the lasso model \cite{Tibs96}. On the other
hand, taking $q=1$, $\alpha_1>0$, and $\alpha_2=0$, leads to
the least absolute deviation lasso model of \cite{Xuji10}. 
Finally, taking $q=2$, $\alpha_1=0$, and $\alpha_2>0$ yields to 
the bridge model \cite{Fran93}. The formulation \eqref{e:ridge} 
corresponds to the special case of Problem~\ref{prob:1} in which 
\begin{equation}
\label{e:kj60}
\begin{cases}
N=1,\;n_1=n,\;\varphi_1=\|\cdot\|_q^q\\
P=1,\;p_1=p,\;\psi_1=0,\;L_1=0\\
\varsigma=\iota_{\{1\}},\;
\varpi=0,\;
\theta=\alpha_1\|\cdot\|_1+\alpha_2\|\cdot\|_r^r.
\end{cases}
\end{equation}
Note that our choice of $\varsigma$ imposes that 
$\sigma_1=1$ and therefore that
$\widetilde{\varphi}_1(\sigma_1,\cdot)=\|\cdot\|_q^q$. 
The proximity operator of $\varphi_1$ is derived in 
\cite{Chau07} and that of $\theta$ in \cite{Siop07}.
\end{example}

\begin{example}
Given $\alpha_1$ and $\alpha_2$ in $\RP$ and $q\in\{1,2\}$, 
consider the model 
\begin{equation}
\label{e:fused}
\minimize{b\in\RR^p}{\|Xb-y\|_2^2+\alpha_1\sum_{i=1}^p|\beta_i|
+\alpha_2\sum_{i=1}^{p-1}|\beta_{i+1}-\beta_i|^q}.
\end{equation}
It derives from Problem~\ref{prob:1} by setting 
\begin{equation}
\label{e:kj61}
\begin{cases}
N=1,\;n_1=n,\;\varphi_1=\|\cdot\|_2^2\\
P=p-1,\;(\forall i\in\{1,\ldots,P\})\;\; p_i=1,\; 
\psi_i=\alpha_2|\cdot|^q,\;L_i\colon b\mapsto\beta_{i+1}-\beta_i\\
\varsigma=\iota_{\{1\}},\;
\varpi=\iota_{\{(1,\ldots,1)\}},\;
\theta=\alpha_1\|\cdot\|_1.
\end{cases}
\end{equation}
For $q=1$, we obtain the fused lasso model \cite{Tibs05}, while 
$q=2$ yields the smooth lasso formulation of \cite{Hebi11}.
Let us note that 
one obtains alternative formulations such that of
\cite{Tibs14} by suitably redefining the operators 
$(L_i)_{1\leq i\leq P}$ in \eqref{e:kj61}.
\end{example}

\begin{example}
\label{ex:3}
Given $\rho_1$ and $\rho_2$ in $\RPP$, the formulation proposed in
\cite{Owen07} is
\begin{equation}
\label{e:owen1}
\minimize{\sigma\in\RPP,\,\tau\in\RPP,\,b\in\RR^p}
{\sigma\sum_{i=1}^n
\mathsf{h}_{\rho_1}\bigg(\frac{x_i^\top b-\eta_i}{\sigma}\bigg)
+n\sigma+\alpha_1\tau\sum_{i=1}^p{\mathsf{b}_{\rho_2}
\bigg(\frac{\beta_i}{\tau}\bigg)}+p\tau,}
\end{equation}
where $\mathsf{h}_{\rho_1}$ and
$\mathsf{b}_{\rho_2}$ are the Huber and Berhu functions of 
\eqref{e:huber} and \eqref{e:berhu}, respectively. 
From a convex optimization viewpoint, we reformulate this problem
more formally in terms of the lower semicontinuous function
of \eqref{e:perspective} to obtain
\begin{equation}
\label{e:owen2}
\minimize{\sigma\in\RR,\,\tau\in\RR,\,b\in\RR^p}
{\sum_{i=1}^n[\mathsf{h}_{\rho_1}+n]^\sim
\big(\sigma,x_i^\top b-\eta_i\big)
+\alpha_1\sum_{i=1}^p[\mathsf{b}_{\rho_2}+p]^\sim
\big(\tau,\beta_i\big).}
\end{equation}
This is a special case of Problem~\ref{prob:1} with
\begin{equation}
\label{e:kj62}
\begin{cases}
N=n\;\;\text{and}\;\;(\forall i\in\{1,\ldots,N\})\;\; n_i=1,\; 
\varphi_i=\mathsf{h}_{\rho_1}+n,\;X_i=x_i^\top\\
P=p\;\;\text{and}\;\;(\forall i\in\{1,\ldots,P\})\;\; p_i=1,\; 
\psi_i=\alpha_1\mathsf{b}_{\rho_2}+p,\;L_i\colon b\mapsto
\beta_i\\
\varsigma=\iota_D,\;\;\text{where}\;\;
D=\menge{(\sigma,\ldots,\sigma)\in\RR^n}{\sigma\in\RR}\\
\varpi=\iota_E,\;\;\text{where}\;\;
E=\menge{(\tau,\ldots,\tau)\in\RR^p}{\tau\in\RR}\\
\theta=0.
\end{cases}
\end{equation}
If one omits the right-most summation in \eqref{e:owen2} one
recovers Huber's concomitant model \cite{Hube81}. Note that
\begin{equation}
\label{e:turk}
\prox_{\varsigma}=\proj_D\colon(\sigma_i)_{1\leq i\leq n}\mapsto
\Bigg(\frac{1}{n}\sum_{i=1}^n\sigma_i,\ldots,
\frac{1}{n}\sum_{i=1}^n\sigma_i\Bigg).
\end{equation}
The operator $\prox_{\varpi}$ is computed likewise. On the other
hand, the proximity operators of 
$\mathsf{h}_{\rho_1}$ and $\mathsf{b}_{\rho_2}$ are provided in
Examples~\ref{ex:gh} and \ref{ex:bh}, respectively.
\end{example}

\begin{example}
\label{ex:4}
The scaled square-root elastic net formulation of \cite{Rani17} is
\begin{equation}
\label{e:rani1}
\minimize{\sigma\in\RPP,\,b\in\RR^p}
{\frac{\|Xb-y\|_2^2}{2\sigma}+\frac{n\sigma}{2}
+\alpha_1\|b\|_1+\alpha_2\|b\|_2^q},
\end{equation}
where $\alpha_1\in\RP$, $\alpha_2\in\RP$, and $q\in\{1,2\}$.
Reformulated more formally in terms of lower semicontinuous 
functions, this model becomes
\begin{equation}
\label{e:rani2}
\minimize{\sigma\in\RR,\,b\in\RR^p}
{\bigg[\frac{\|\cdot\|_2^2+n}{2}\bigg]^\sim\big(\sigma,Xb-y\big)
+\alpha_1\|b\|_1+\alpha_2\|b\|_2^q}.
\end{equation}
We thus obtain the special case of Problem~\ref{prob:1} in which
\begin{equation}
\label{e:rani3}
\begin{cases}
N=1,\; n_1=n,\;\varphi_1=\big(\|\cdot\|_2^2+n\big)/2\\
P=p\;\;\text{and}\;\;(\forall i\in\{1,\ldots,P\})\;\; p_i=1,\; 
\psi_i=\alpha_1|\cdot|,\;L_i\colon b\mapsto\beta_i\\
\varsigma=0,\;\varpi=0,\;\theta=\alpha_2\|b\|_2^q.
\end{cases}
\end{equation}
The proximity operator of $\theta$ is given in \cite{Smms05}, 
while that of $\widetilde{\varphi}_1$ is provided in 
Example~\ref{ex:sl}.
Note that, when $q=2$, we could also take the functions
$(\psi_i)_{1\leq i\leq P}$ to be zero and 
$\theta=\alpha_1\|b\|_1+\alpha_2\|b\|_2^2$ since the proximity
operator of $\theta$ is computable explicitly in this case
\cite{Siop07}. When $\alpha_2=0$ in \eqref{e:rani2}, we obtain 
the scaled lasso model \cite{Anto10,Sunt12}. On the other hand if
we use $\alpha_2=0$ and
$\varsigma=\iota_{\left[\varepsilon,\pinf\right[}$ for some
$\varepsilon\in\RPP$ in \eqref{e:rani2},
we recover the formulation of \cite{Ndia17}.
\end{example}

\begin{example}
\label{ex:9}
Given $\alpha$, $\rho_1$, $\rho_2$, 
and $(\omega_i)_{1\leq i\leq p}$ in $\RPP$, the formulation 
proposed in \cite{Lamb16} is
\begin{multline}
\label{e:lamb1}
\minimize{\sigma\in\RR,\,\tau\in\RR,\,b\in\RR^p}
{
\begin{cases}
\sigma\Sum_{i=1}^n
\mathsf{h}_{\rho_1}\bigg(\dfrac{x_i^\top b-\eta_i}
{\sigma}\bigg)+n\sigma,&\text{if}\;\;\sigma>0;\\
\rho_1\Sum_{i=1}^n\big|x_i^\top b-\eta_i\big|,
&\text{if}\;\;\sigma=0;\\
\pinf,&\text{if}\;\;\sigma<0
\end{cases}\\[4mm]
+
\begin{cases}
\alpha\tau\Sum_{i=1}^p\bigg(\omega_i{\mathsf{b}_{\rho_2}\bigg(
\dfrac{\beta_i}{\tau}\bigg)}+\frac{1}{\omega_i}\bigg),
&\text{if}\;\;\tau>0;\\
0,&\text{if}\;\;b=0\;\text{and}\;\tau=0;\\
\pinf,&\text{otherwise,}
\end{cases}
}
\end{multline}
where $\mathsf{h}_{\rho_1}$ and $\mathsf{b}_{\rho_2}$ are the
Huber and Berhu functions of
\eqref{e:huber} and \eqref{e:berhu}, respectively.  In view of
\eqref{e:perspective}, we can rewrite \eqref{e:lamb1} as 
\begin{equation}
\label{e:lamb28}
\minimize{\sigma\in\RR,\,\tau\in\RR,\,b\in\RR^p}
{\sum_{i=1}^n[\mathsf{h}_{\rho_1}+n]^\sim
\big(\sigma,x_i^\top b-\eta_i\big)+\alpha\sum_{i=1}^p
\bigg[\omega_i\mathsf{b}_{\rho_2}+\dfrac{1}{\omega_i}\bigg]^\sim
\big(\tau,\beta_i\big).}
\end{equation}
This is a special case of Problem~\ref{prob:1} with
\begin{equation}
\label{e:lamb3}
\begin{cases}
N=n\;\;\text{and}\;\;(\forall i\in\{1,\ldots,N\})\;\; n_i=1,\; 
\varphi_i=\mathsf{h}_{\rho_1}+n,\;X_i=x_i^\top\\
P=p\;\;\text{and}\;\;(\forall i\in\{1,\ldots,P\})\;\; p_i=1,\; 
\psi_i=\alpha\big(\omega_i\mathsf{b}_{\rho_2}+1/\omega_i\big),\;
L_i\colon b\mapsto\beta_i\\
\varsigma=\iota_D,\;\;\text{where}\;\;
D=\menge{(\sigma,\ldots,\sigma)\in\RR^n}{\sigma\in\RR}\\
\varpi=\iota_E,\;\;\text{where}\;\;
E=\menge{(\tau,\ldots,\tau)\in\RR^p}{\tau\in\RR}\\
\theta=0.
\end{cases}
\end{equation}
The variant studied in \cite{Lamb11} replaces the functions
$(\psi_i)_{1\leq i\leq p}$ of \eqref{e:lamb3} by
$(\forall i\in\{1,\ldots,p\})$ $\psi_i=\alpha\omega_i|\beta_i|$.
\end{example}

\begin{example}
\label{ex:7}
Let $\alpha\in\RPP$. The formulation 
\begin{equation}
\label{e:bien1}
\minimize{\tau\in\RPP,\,b\in\RR^p}
{-\dfrac{\ln\tau}{2}+\dfrac{\|y\|_2^2\,\tau}{2n}
+\frac{\|Xb\|_2^2}{2n\tau}
+\alpha\|b\|_1-\dfrac{{y}^\top{Xb}}{n}},
\end{equation}
was proposed in \cite{Bien18} under the name ``natural lasso."
It can be cast in the framework of Problem~\ref{prob:1} with
\begin{equation}
\label{e:bien2}
\begin{cases}
N=1,\; n_1=n,\;\varphi_1=0\\
P=1,\; p_1=p,\;\psi_1=\|\cdot\|_2^2/(2n),\;L_1=X\\
\varsigma=0,\;
\theta=\alpha\|\cdot\|_1-\scal{X^\top y}{\cdot}/n
\end{cases}
\end{equation}
and
\begin{equation}
\varpi\colon\tau\mapsto
\begin{cases}
-(\ln\tau)/2+\|y\|_2^2\tau/(2n),&\text{if}\;\;\tau>0;\\
\pinf,&\text{if}\;\;\tau\leq 0.
\end{cases}
\end{equation}
The proximity operators of $\theta$ and $\varpi$ are given in 
\cite{Smms05}.
\end{example}

\begin{example}
\label{ex:8}
Given $\alpha$ and $\varepsilon$
in $\RPP$, define $\mathsf{v}_i\colon\RR\to\RR\colon\eta\mapsto
\alpha+\text{max}(|\eta|-\varepsilon,0)$. Using the perspective 
function derived in Example~\ref{ex:v}, we can rewrite the 
linear $\nu$-support vector regression problem of \cite{Scho00} as
\begin{equation}
\label{e:nusvm}
\minimize{\sigma\in\RR,\,b\in\RR^p}
{\sum_{i=1}^n\widetilde{\mathsf{v}}_i
\big(\sigma,x_i^\top b-\eta_i\big)+\frac{1}{2}\|b\|^2_2}.
\end{equation}
We identify this problem as a special case of 
Problem~\ref{prob:1} with
\begin{equation}
\label{e:nusvm2}
\begin{cases}
N=n \;\;\text{and}\;\;(\forall i\in\{1,\ldots,N\})\;\;
\varphi_i=\mathsf{v}_i,\;X_i=x_i^\top\\
P=1,\; p_1=p,\;\psi_1=0,\;L_1=0\\
\varsigma=\iota_D,\;\;\text{where}\;\;
D=\menge{(\sigma,\ldots,\sigma)\in\RR^n}
{\sigma\in\RR}\\
\varpi=0,\;\theta=\|\cdot\|^2_2/2.
\end{cases}
\end{equation}
The proximity operator of $\widetilde{\mathsf{v}}_i$ is given in
Example~\ref{ex:v} and that of $\varsigma$ in \eqref{e:turk}.
The concomitant parameter $\sigma$ scales the width of the
``tube'' in the $\nu$-support vector regression and trades off
model complexity and slack variables \cite{Scho00}. 
\end{example}

The next two examples are novel M-estimators that will be employed
in Section~\ref{sec:5}.

\begin{example}
\label{ex:24a}
In connection with \eqref{e:blocks}, we introduce a generalized
heteroscedastic scaled lasso with $N$ data blocks, which employs
the perspective derived in Example~\ref{ex:sl}. Recall that $n_i$
is the number of data points in the $i$th block, let
$\alpha_1\in\RP$, and set
\begin{equation}
\label{e:u83}
(\forall i\in\{1,\ldots,N\})\quad
\mathsf{c}_{i,q}\colon\RR^{n_i}\to\RR\colon x\mapsto
\|x\|_2^q+\dfrac{1}{2}.
\end{equation}
The objective is to 
\begin{equation}
\label{e:lamb2}
\minimize{s\in\RR^N,\,b\in\RR^p}
{\sum_{i=1}^N\widetilde{\mathsf{c}}_{i,q}
\big(\sigma_i,X_ib-y_i\big)+\alpha_1\|b\|_1}.
\end{equation}
This is a special case of Problem~\ref{prob:1} with
\begin{equation}
\label{e:lamb4}
\begin{cases}
(\forall i\in\{1,\ldots,N\})\;\;
\varphi_i=\mathsf{c}_{i,q}\\
P=1,\; p_1=p,\;\psi_1=0,\;L_1=0\\
\varpi=0,\;\varsigma=0,\;\theta=\alpha_1\|\cdot\|_1.
\end{cases}
\end{equation}
The choice of the exponent $q\in\left]1,\pinf\right[$ reflects
prior distributional assumptions on the noise. This model can
handle generalized normal distributions. The proximity operator of
$\widetilde{\mathsf{c}}_{i,q}$ is provided in Example~\ref{ex:sl}.
\end{example}

\begin{example}
\label{ex:24b}
In connection with \eqref{e:blocks}, 
we introduce a generalized heteroscedastic Huber M-estimator,
with $J$ scale variables $(\sigma_j)_{1\leq j\leq J}$, which 
employs the perspective derived in
Example~\ref{ex:gh}. Each scale $\sigma_j$ is attached to a 
group of $m_j$ data points, hence $\sum_{j=1}^Jm_j=n$.
Let $\alpha_1$ and $\alpha_2$ be in $\RP$, let $\delta$, $\rho_1$, 
and $\rho_2$ be in $\RPP$, and denote by $\mathsf{h}_{\rho_1,q}$
the function in \eqref{e:gh}, where $\HH=\RR$. The
objective is to 
\begin{equation}
\label{e:lamb5}
\minimize{s\in\RR^J,\,\tau\in\RR,\,b\in\RR^p}
{\sum_{j=1}^J\sum_{i=1}^{m_j}[\mathsf{h}_{\rho_1,q}+\delta]^\sim
\big(\sigma_j,x_i^\top b-\eta_i\big)+\alpha_1\|b\|_1
+\alpha_2\sum_{i=1}^p[\mathsf{b}_{\rho_2}+p]^\sim
\big(\tau,\beta_i\big).}
\end{equation}
This statistical model is rewritten in the format of the
computational model described in Problem~\ref{prob:1} by choosing
\begin{equation}
\label{e:kj82}
\begin{cases}
N=n\;\;\text{and}\;\;(\forall i\in\{1,\ldots,N\})\;\; n_i=1,\; 
\varphi_i=\mathsf{h}_{\rho_1,q}+\delta,\;X_i=x_i^\top\\
P=p\;\;\text{and}\;\;(\forall i\in\{1,\ldots,P\})\;\; p_i=1,\; 
\psi_i=\alpha_2\mathsf{b}_{\rho_2}+p,\;L_i\colon b\mapsto\beta_i\\
\varsigma=\iota_D,\;\;\text{where}\;\;
D=\menge{(\sigma_1,\ldots,\sigma_1,\ldots,\sigma_J,\ldots,\sigma_J)
\in\RR^n}{(\sigma_j)_{1\leq j\leq J}\in\RR^J}\\
\varpi=\iota_E,\;\;\text{where}\;\;
E=\menge{(\tau,\ldots,\tau)\in\RR^p}{\tau\in\RR}\\
\theta=\alpha_1\|\cdot\|_1.
\end{cases}
\end{equation}
The choice of the exponent $q\in\left]1,\pinf\right[$ reflects
prior distributional assumptions on the noise. This model
handles generalized normal distributions and can identify outliers. 
Note that 
\begin{equation}
\prox_{\varsigma}=\proj_D\colon(\sigma_i)_{1\leq i\leq n}\mapsto
\Bigg(\underbrace{\frac{1}{m_1}\sum_{i=1}^{m_1}\sigma_i,\ldots,
\frac{1}{m_1}\sum_{i=1}^{m_1}\sigma_i}_{m_1\;\text{terms}},\ldots,
\underbrace{\frac{1}{m_J}\sum_{i=n-m_{J}+1}^{n}\sigma_i,\ldots,
\frac{1}{m_J}\sum_{i=n-m_{J}+1}^{n}\sigma_i}_{m_J\;\text{terms}}
\Bigg).
\end{equation}
\end{example}

\begin{remark}
Particular instances of perspective M-estimation models come with
statistical guarantees. For the scaled lasso, initial theoretical
guarantees are given in \cite{Sunt12}. In \cite{Lamb11,Lamb16}
results are provided for the homoscedastic Huber
M-estimator with adaptive $\ell^1$ penalty and the adaptive Berhu
penalty. In \cite{Hann17}, explicit bounds for
estimation and prediction error for ``convex loss lasso'' problems
are given which cover scaled homoscedastic lasso, the least
absolute deviation model, and the homoscedastic Huber model. For
the heteroscedastic M-estimators we have presented above, 
statistical guarantees are, to the best of our knowledge, elusive.  
\end{remark}

\section{Algorithm}
\label{sec:4}

Recall from \eqref{e:prob0} that the problem of perspective
M-estimation is to
\begin{equation}
\label{e:prob1}
\minimize{s\in\RR^N,\,t\in\RR^P,\,b\in\RR^p}
\varsigma(s)+\varpi(t)+\theta(b)
+{\Sum_{i=1}^{N}\widetilde{\varphi}_i
\big(\sigma_i,X_ib-y_i\big)+\Sum_{i=1}^{P}
\widetilde{\psi}_i\big(\tau_i,L_ib\big)}.
\end{equation}
This minimization problem is quite complex, as it involves the
sum of several terms, compositions with linear operators, 
as well as perspective functions. In addition, 
none of the functions present in the model is assumed to have 
any full domain or smoothness property. 
In this section, we show that via
suitable reformulations in higher dimensional product spaces,
\eqref{e:prob1} can be reduced to a problem which is amenable to
Douglas-Rachford splitting and which, once reformulated in the
original space, produces a method which requires only to use
separately the proximity operators of the functions $\theta$, 
$(\widetilde{\varphi}_i)_{1\leq i\leq N}$, and
$(\widetilde{\psi}_i)_{1\leq i\leq P}$, the proximity operators
of the functions $\varsigma$ and $\varpi$, 
as well as application of
simple linear transformations. This method will be shown to
produce sequence $(s_k)_{k\in\NN}$, $(t_k)_{k\in\NN}$, and 
$(b_k)_{k\in\NN}$ which converge respectively to vectors $s$,
$t$, and $b$ that solve \eqref{e:prob1}.

Let us set $\varrho\colon\RR^N\times\RR^P\to\RX\colon(s,t)
\mapsto\varsigma(s)+\varpi(t)$, $M=N+P$, and 
\begin{equation}
(\forall i\in\{1,\ldots,N\})\quad
\begin{cases}
\vartheta_i=\varphi_i\\
m_i=n_i\\
w_i=y_i\\
A_i=X_i
\end{cases}
\quad\text{and}\quad
(\forall i\in\{N+1,\ldots,M\})\quad
\begin{cases}
\vartheta_i=\psi_{i-N}\\
m_i=p_{i-N}\\
w_i=0\\
A_i=L_{i-N}.
\end{cases}
\end{equation}
Then, upon introducing the variable
$v=(s,t)=(\nu_i)_{1\leq i\leq M}\in\RR^M$, we can rewrite 
\eqref{e:prob1} as
\begin{equation}
\label{e:prob72}
\minimize{v\in\RR^{M},\,b\in\RR^p}
{\varrho(v)+\theta(b)+\Sum_{i=1}^{M}\widetilde{\vartheta}_i
\big(\nu_i,A_ib-w_i\big)}.
\end{equation}
Now let us set $m=n+p$ and define 
\begin{equation}
\label{e:nancago12}
A=
\begin{bmatrix}
A_1\\
\vdots\\
A_M\\
\end{bmatrix}
\end{equation}
and 
\begin{equation}
\label{e:nancago4}
\begin{cases}
\begin{array}{ccll}
\boldsymbol{f}\colon&\RR^M\times\RR^p&\to&\RX\\[2mm]
&(v,b)\;&\mapsto&\varrho(v)+\theta(b)
\end{array}\\[5mm]
\begin{array}{ccll}
\boldsymbol{g}\colon&\RR^M\times\RR^{m}&\to&\RX\\
&\big(v,z)&\mapsto&
\Sum_{i=1}^M\widetilde{\vartheta}_i(\nu_i,z_i-w_i)
\end{array}\\[7mm]
\begin{array}{ccll}
\boldsymbol{L}\colon&\RR^M\times\RR^p&\to&
\RR^M\times\RR^m\\[2mm]
&(v,b)\;&\mapsto&\big(v,Ab\big).
\end{array}
\end{cases}
\end{equation}
Then, upon introducing the variable 
$\boldsymbol{a}=(v,b)\in\RR^M\times\RR^p$, \eqref{e:prob72} can
be rewritten as 
\begin{equation}
\label{e:prob73}
\minimize{\boldsymbol{a}\in\RR^{M+p}}
{\boldsymbol{f}(\boldsymbol{a})+
\boldsymbol{g}(\boldsymbol{L}\boldsymbol{a})},
\end{equation}
which we can solve by various algorithms \cite{Siop11,MaPr18}. 
Following an approach used in \cite{Jmaa18} and \cite{Siop15}, we
reformulate \eqref{e:prob73} as a problem involving the
sum of two functions $\boldsymbol{F}$ and $\boldsymbol{G}$, 
and then solve it via the Douglas-Rachford algorithm 
\cite{Livre1,Ecks92,Lion79}. To this end,
define 
\begin{equation}
\label{e:ramm3}
\boldsymbol{F}\colon\RR^{M+p}\times\RR^{M+m}\to\RX\colon
(\boldsymbol{a},\boldsymbol{c})\mapsto 
\boldsymbol{f}(\boldsymbol{a})+\boldsymbol{g}(\boldsymbol{c})
\end{equation}
and 
\begin{equation}
\label{e:ramm4}
\boldsymbol{G}=\iota_{\boldsymbol{V}}, 
\quad\text{where}\quad
\boldsymbol{V}=\Menge{(\boldsymbol{x},\boldsymbol{h})
\in\RR^{M+p}\times\RR^{M+m}}{\boldsymbol{Lx}=\boldsymbol{h}}
\end{equation}
is the graph of $\boldsymbol{L}$. Then, in terms of the variable
$\boldsymbol{u}=(\boldsymbol{a},\boldsymbol{c})$,
\eqref{e:prob73} is equivalent to 
\begin{equation}
\label{e:prob83}
\minimize{\boldsymbol{u}\in\RR^{2M+m+p}}
{\boldsymbol{F}(\boldsymbol{u})+\boldsymbol{G}(\boldsymbol{u})}.
\end{equation}
Let $\gamma\in\RPP$, let $\boldsymbol{v}_0\in\RR^{2M+p+m}$, and let
$(\mu_k)_{k\in\NN}$ be a sequence in $\left]0,2\right[$ such that 
$\sum_{k\in\NN}\mu_k(2-\mu_k)=\pinf$. The Douglas-Rachford 
algorithm for solving \eqref{e:prob83} is 
\cite[Section~28.3]{Livre1}
\begin{equation}
\label{e:kj18}
\begin{array}{l}
\text{for}\;k=0,1,\ldots\\
\left\lfloor
\begin{array}{l}
\boldsymbol{u}_{k}=\prox_{\gamma{\boldsymbol{G}}}\boldsymbol{v}_k\\
\boldsymbol{w}_{k}=\prox_{\gamma{\boldsymbol{F}}}
(2\boldsymbol{u}_{k}-\boldsymbol{v}_k)\\
\boldsymbol{v}_{k+1}=\boldsymbol{v}_k+
\mu_k(\boldsymbol{w}_{k}-\boldsymbol{u}_{k}).
\end{array}
\right.\\[2mm]
\end{array}
\end{equation}
Under the qualification condition 
\begin{equation}
\label{e:cq}
\boldsymbol{V}\cap\reli\dom\boldsymbol{F}\neq\emp,
\end{equation}
the sequence $(\boldsymbol{u}_k)_{k\in\NN}$ is guaranteed to 
converge to a solution $\boldsymbol{u}$ to \eqref{e:prob83} 
\cite[Corollary~27.4]{Livre1}. To make this algorithm more 
explicit, we first use \eqref{e:ramm3} and 
\cite[Proposition~24.11]{Livre1} to obtain 
\begin{equation}
\label{e:9hfe6-03e}
\prox_{\boldsymbol{F}}\colon (\boldsymbol{a},\boldsymbol{c})
\mapsto\big(\prox_{\boldsymbol{f}}\boldsymbol{a},
\prox_{\boldsymbol{g}}\boldsymbol{c}\big).
\end{equation}
Next, we derive from \eqref{e:ramm4} that $\prox_{\boldsymbol{G}}$
is the projection operator onto $\boldsymbol{V}$, that is
\cite[Example~29.19(i)]{Livre1},
\begin{equation}
\label{e:9hfe6-03f}
\prox_{\boldsymbol{G}}\colon
(\boldsymbol{x},\boldsymbol{h})\mapsto 
(\boldsymbol{a},\boldsymbol{La}),
\quad\text{where}\;\;
\boldsymbol{a}=\boldsymbol{x}-\boldsymbol{L}^\top
\big(\ID+\boldsymbol{LL}^\top\big)^{-1}
(\boldsymbol{Lx}-\boldsymbol{h}).
\end{equation}
Therefore, using the notation
\begin{equation}
\label{e:nancago8}
\boldsymbol{R}=\boldsymbol{L}^\top(\ID+\boldsymbol{LL}^\top)^{-1}
\quad\text{and}\quad
(\forall k\in\NN)\quad 
\begin{cases}
\boldsymbol{u}_k=(\boldsymbol{a}_k,\boldsymbol{c}_k)\\
\boldsymbol{v}_k=(\boldsymbol{x}_k,\boldsymbol{h}_k)\\
\boldsymbol{w}_k=(\boldsymbol{z}_k,\boldsymbol{d}_k),
\end{cases}
\end{equation}
we see that, given some initial points  
$\boldsymbol{x}_0\in\RR^{M+p}$ and $\boldsymbol{h}_0\in\RR^{m+M}$, 
\eqref{e:kj18} amounts to iterating
\begin{equation}
\label{e:9hfe6-03g}
\begin{array}{l}
\text{for}\;k=0,1,\ldots\\
\left\lfloor
\begin{array}{l}
\boldsymbol{q}_k=\boldsymbol{Lx}_k-\boldsymbol{h}_k\\
\boldsymbol{a}_{k}=\boldsymbol{x}_k-\boldsymbol{Rq}_k\\
\boldsymbol{c}_{k}=\boldsymbol{L}\boldsymbol{a}_{k}\\
\boldsymbol{z}_{k}=\prox_{\gamma\boldsymbol{f}}
(2\boldsymbol{a}_k-\boldsymbol{x}_k)\\
\boldsymbol{d}_{k}=\prox_{\gamma\boldsymbol{g}}
(2\boldsymbol{c}_k-\boldsymbol{h}_k)\\
\boldsymbol{x}_{k+1}=\boldsymbol{x}_k+\mu_k(\boldsymbol{z}_k-
\boldsymbol{a}_k)\\
\boldsymbol{h}_{k+1}=\boldsymbol{h}_k+
\mu_k(\boldsymbol{d}_k-\boldsymbol{c}_k).
\end{array}
\right.\\[2mm]
\end{array}
\end{equation}
In addition, it generates a sequence $(\boldsymbol{a}_k)_{k\in\NN}$
that converges to a solution $\boldsymbol{a}$ to \eqref{e:prob73}. 
Now set
\begin{equation}
\label{e:nancago11}
(\forall k\in\NN)\quad
\begin{cases}
\boldsymbol{a}_k=(s_k,t_k,b_k)\in\RR^N\times\RR^P\times\RR^p\\
\boldsymbol{c}_k=(s_k,t_k,c_{b,k})\in
\RR^N\times\RR^P\times\RR^{n+p}\\ 
\boldsymbol{x}_k=(x_{s,k},x_{t,k},x_{b,k})\in
\RR^N\times\RR^P\times\RR^p\\ 
\boldsymbol{h}_k=(h_{s,k},h_{t,k},h_{b,k})\in
\RR^N\times\RR^P\times\RR^{n+p}\\ 
\boldsymbol{z}_k=(z_{s,k},z_{t,k},z_{b,k})\in
\RR^N\times\RR^P\times\RR^p\\ 
\boldsymbol{d}_k=(d_{s,k},d_{t,k},d_{b,k})\in
\RR^N\times\RR^P\times\RR^{n+p}\\ 
\boldsymbol{q}_k=(q_{s,k},q_{t,k},q_{b,k})\in
\RR^N\times\RR^P\times\RR^{n+p},
\end{cases}
\end{equation}
and observe that \eqref{e:nancago4} and \eqref{e:nancago8} yield
\begin{equation}
(\forall k\in\NN)\quad
\boldsymbol{R}\boldsymbol{q}_k=\Big(q_{s,k}/2,q_{t,k}/2,
Qq_{b,k}\Big),
\quad\text{where}\quad
Q=A^\top(\Id+{AA}^\top)^{-1}.
\end{equation}
Let us further decompose the above vectors as 
\begin{equation}
\begin{cases}
s_{k}=(\sigma_{1,k},\ldots,\sigma_{N,k})\in\RR^N\\
t_{k}=(\tau_{1,k},\ldots,\tau_{P,k})\in\RR^P\\
h_{s,k}=(\eta_{1,k},\ldots,\eta_{N,k})\in\RR^N\\
h_{t,k}=(\eta_{N+1,k},\ldots,\eta_{N+P,k})\in\RR^P\\
h_{b,k}=(h_{1,k},\ldots,h_{n,k},h_{n+1,k},\ldots,h_{n+p,k})\in
\RR^{n_1}\times\cdots\times\RR^{n_N}\times\RR^{p_1}\times
\cdots\times\RR^{p_P}\\
c_{b,k}=(c_{1,k},\ldots,c_{n,k},c_{n+1,k},\ldots,c_{n+p,k})\in
\RR^{n_1}\times\cdots\times\RR^{n_N}\times\RR^{p_1}\times
\cdots\times\RR^{p_P}\\
q_{b,k}=(q_{1,k},\ldots,q_{n,k},q_{n+1,k},\ldots,q_{n+p,k})\in
\RR^{n_1}\times\cdots\times\RR^{n_N}\times\RR^{p_1}\times
\cdots\times\RR^{p_P}\\
d_{s,k}=(\delta_{1,k},\ldots,\delta_{N,k})\in\RR^N\\
d_{t,k}=(\delta_{N+1,k},\ldots,\delta_{N+P,k})\in\RR^P\\
d_{b,k}=(d_{1,k},\ldots,d_{N,k},d_{N+1,k},\ldots,d_{N+P,k})\in
\RR^{n_1}\times\cdots\times\RR^{n_N}\times\RR^{p_1}\times
\cdots\times\RR^{p_P}.
\end{cases}
\end{equation}
Then, given $x_{s,0}\in\RR^{N}$,
$x_{t,0}\in\RR^{P}$,
$x_{b,0}\in\RR^{p}$,
$h_{s,0}\in\RR^{N}$,
$h_{t,0}\in\RR^{P}$, and
$h_{b,0}\in\RR^{m}$,
\eqref{e:9hfe6-03g} consists in iterating
\begin{equation}
\label{e:9hyR09}
\begin{array}{l}
\text{for}\;k=0,1,\ldots\\
\left\lfloor
\begin{array}{l}
q_{s,k}=x_{s,k}-h_{s,k}\\
q_{t,k}=x_{t,k}-h_{t,k}\\
q_{b,k}=Ax_{b,k}-{h}_{b,k}\\
\text{for}\;i=1,\ldots,N\\
\left\lfloor
\begin{array}{l}
q_{i,k}=X_ix_{b,k}-{h}_{i,k}
\end{array}
\right.\\[2mm]
\text{for}\;i=1,\ldots,P\\
\left\lfloor
\begin{array}{l}
q_{N+i,k}=L_ix_{b,k}-{h}_{N+i,k}
\end{array}
\right.\\[2mm]
s_{k}=x_{s,k}-q_{s,k}/2\\
t_{k}=x_{t,k}-q_{t,k}/2\\
b_{k}=x_{b,k}-Qq_{b,k}\\
z_{s,k}=\prox_{\gamma\varsigma}(2s_k-x_{s,k})\\
z_{t,k}=\prox_{\gamma\varpi}(2t_k-x_{t,k})\\
z_{b,k}=\prox_{\gamma\theta}(2b_k-x_{b,k})\\
x_{s,k+1}=x_{s,k}+\mu_k({z}_{s,k}-s_k)\\
x_{t,k+1}=x_{t,k}+\mu_k({z}_{t,k}-t_k)\\
x_{b,k+1}=x_{b,k}+\mu_k(z_{b,k}-b_k)\\
\text{for}\;i=1,\ldots,N\\
\left\lfloor
\begin{array}{l}
c_{i,k}=X_ib_k\\
(\delta_{i,k},d_{i,k})
=(0,y_i)+\prox_{\gamma\widetilde{\varphi}_i}
(2\sigma_{i,k}-\eta_{i,k},2c_{i,k}-h_{i,k}-y_i)\\
\end{array}
\right.\\[2mm]
\text{for}\;i=1,\ldots,P\\
\left\lfloor
\begin{array}{l}
c_{N+i,k}=L_ib_k\\
(\delta_{N+i,k},d_{N+i,k})=\prox_{\gamma\widetilde{\psi}_i}
(2\tau_{i,k}-\eta_{N+i,k},2c_{N+i,k}-h_{N+i,k})\\
\end{array}
\right.\\[2mm]
h_{s,k+1}=h_{s,k}+\mu_k(d_{s,k}-s_k)\\
h_{t,k+1}=h_{t,k}+\mu_k(d_{t,k}-t_k)\\
h_{b,k+1}=h_{b,k}+\mu_k(d_{b,k}-c_{b,k}).
\end{array}
\right.\\[2mm]
\end{array}
\end{equation}
Using the above mentioned results for the convergence of the
sequence $(\boldsymbol{u}_k)_{k\in\NN}$ produced by
\eqref{e:kj18}, we obtain in the setting
of Problem~\ref{prob:1} the convergence of the 
sequences $(s_k)_{k\in\NN}$, $(t_k)_{k\in\NN}$, and 
$(b_k)_{k\in\NN}$ generated by \eqref{e:9hyR09} to vectors $s$,
$t$, and $b$, respectively, that solve \eqref{e:prob1}.

\section{Numerical experiments}
\label{sec:5}
We illustrate the versatility of perspective M-estimation for
sparse robust regression in a number of numerical experiments.  The
algorithm outlined in Section \ref{sec:4} has been implemented for
several important instances in MATLAB and is available at
\url{https://github.com/muellsen/PCM}. We set
$\mu=1.9$ and $\gamma=1$ for all model instances. We declare that
the algorithm has converged at iteration $k$ if
$\|b_k-b_{k+1}\|_2<\epsilon$, for some $\epsilon\in\RPP$ to be
specified.

\begin{figure}[h]
\centering
\includegraphics[width=14cm]{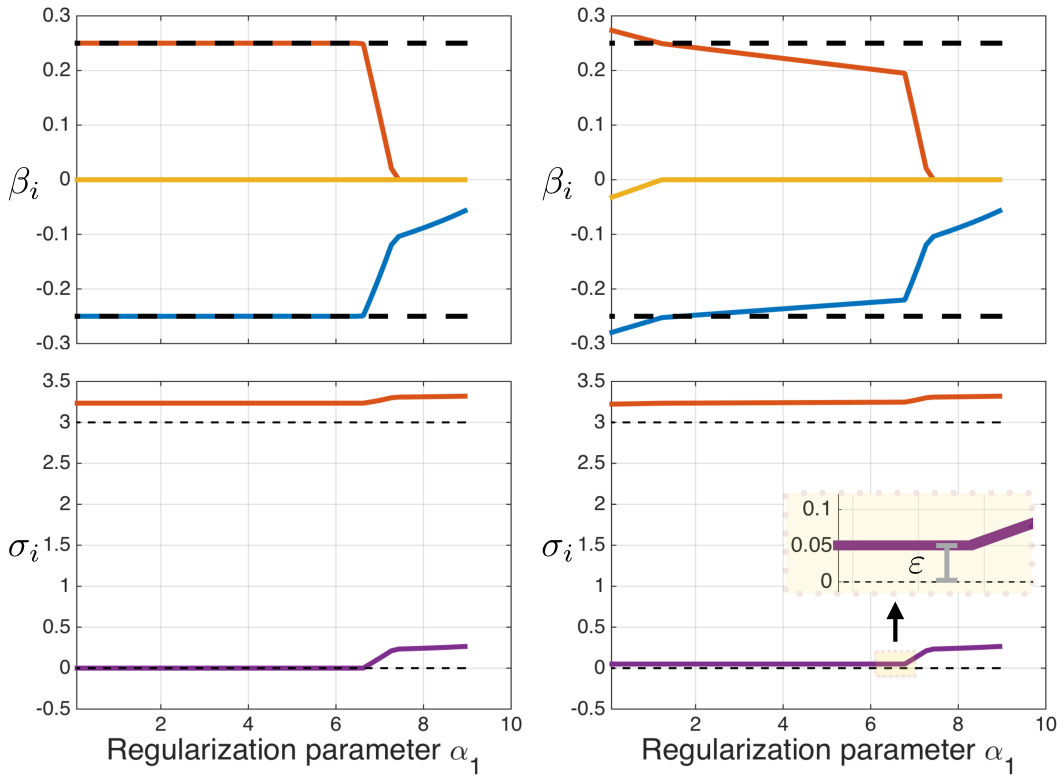}
\caption{Generalized heteroscedastic lasso solutions for  
$b$ (top panel) and $s$ (bottom panel) across the
$\alpha_1$-path. The left panels show the results of fully
non-smooth perspective M-estimation. The right panels show the
results for a smoothed version
with $\sigma_i\in\left[\varepsilon,\pinf\right[$ for
$\varepsilon=0.05$. The dashed lines mark the ground truth entries
of $\overline{b}$ and $\overline{s}$. \label{fig:nonsmooth}}
\end{figure}

\subsection{Numerical illustrations on low-dimensional data}
Our algorithmic approach to perspective M-estimation can
effortlessly handle non-smooth data fitting terms. To illustrate
this property, we consider a partially noiseless data formation
model in low dimensions. We instantiate the data model \eqref{e:1} as
follows. We consider the design matrix $X\in\RR^{n\times p}$ with
$p=3$ and sample size $n=18$. Entries in the design matrix and the
noise vector $e\in\RR^n$ are sampled from a standard normal
distribution $\mathcal{N}(0,1)$.  The matrix $C\in\RR^{n\times n}$
is a diagonal matrix with $N=2$ groups.  We set 
$\overline{s}=[\overline{\sigma}_1, \overline{\sigma}_2]^\top=
[3,0]^\top$.  The $i$th diagonal entry of $C$
is set to $\overline{\sigma}_1$ for $i\in\{1,\ldots,8\}$
and to $\overline{\sigma}_2$ for $i\in\{10,\ldots,18\}$, resulting
in noise-free observations for the second group. The mean shift
(or outlier) vector is $\overline{o}=[0,\ldots,0]^\top$. The
regression
vector is $\overline{b}=[0.25,-0.25,0]^\top$. The goal is to
estimate the regression vector $\overline{b}\in\RR^3$ as well as the
concomitant (or scale) vector 
$\overline{s}=[\overline{\sigma}_1,\overline{\sigma}_2]^\top$.  We
consider the generalized heteroscedastic lasso of
Example~\ref{ex:24a} with $N=2$ and $q=2$. To demonstrate
the advantage of our non-smooth approach in this partially
noiseless setting, we consider two variations of the model, the
standard non-smooth case and a smoothed version with 
$D=\left[\varepsilon,\pinf\right[^2$
for $\varepsilon=0.05$. The convergence accuracy 
for the algorithm is set to $\epsilon=10^{-8}$.
Figure~\ref{fig:nonsmooth} shows 
the estimates $b=[\beta_1,\beta_2,\beta_3]^\top$ 
and $s=[\sigma_1,\sigma_2]^\top$ across the regularization path, 
where $\alpha_1\in\{0.089,\ldots,8.95\}$, with $200$ values
equally spaced on a log-linear grid for both settings. 
The results indicate that only the heteroscedastic lasso in the
non-smooth setting can recover the ground truth regression vector
$\overline{b}$ (top left panel) and $\overline{\sigma}_2=0$
(bottom left
panel). In both settings the estimate $\sigma_1$ is slightly
overestimated (due to the finite sample size).  The ``smoothed''
version of the heteroscedastic lasso cannot achieve exact recovery
of $\overline{b}$ across the regularization path (top right panel).

\subsection{Numerical illustrations for correlated designs and
outliers}
To illustrate the efficacy of the different M-estimators we 
instantiate the full data formation model \eqref{e:1} as
follows. We consider the design matrix $X\in\RR^{n\times p}$ with
$p=64$ and sample size $n=75$ where each row $X_i$ is sampled
from a correlated normal distribution $\mathcal{N}(0,\Sigma)$
with off-diagonal entries $0.3$ and diagonal entries
$1$. The entries of $e\in\RR^n$ are realizations of
i.i.d. zero mean normal variables $\mathcal{N}(0,1)$. The matrix
$C\in\RR^{n\times n}$ is a diagonal matrix with three groups. 
We set $\overline{s}=[\overline{\sigma}_1, \overline{\sigma}_2,
\overline{\sigma}_3]^\top =[5,0.5,0.05]^\top$.
The $i$th diagonal element of $C$ is set to 
$\overline{\sigma}_1$ for $i\in\{1,\ldots,25\}$, to 
$\overline{\sigma}_2$ for $i\in\{26,\ldots,50\}$, and to 
$\overline{\sigma}_3$ for $i\in\{51,\ldots,75\}$. The mean shift
vector $o\in\RR^n$ contains $\lceil 0.1 n \rceil=8$
non-zero entries, sampled from $\mathcal{N}(0,5)$. The entries of
the regression vector $\overline{b}\in\RR^p$ are set to
$\overline{\beta}_i=-1$ for $i\in\{1,3,5\}$ and
$\overline{\beta}_i= 1$ for $i\in\{2,4,6\}$.  

The presence of outliers, correlation in the design, and
heteroscedasticity provides a considerable challenge for regression
estimation and support recovery with standard models such as the
lasso. We consider instances of the perspective
M-estimation model of increasing complexity that can cope with
various aspects of the data formation model. Specifically, we 
use the models outlined in Examples~\ref{ex:24a} and
\ref{ex:24b} (with $\alpha_2=0$) in homoscedastic and
heteroscedastic mode. For all models, we compute the minimally
achievable mean absolute error (MAE) $\|X b- X\overline{b}\|_1/n$
across the $\alpha_1$-regularization path, where $\alpha_1 
\in\{0.254,\ldots,25.42\}$, with $50$ values
equally spaced on a log-linear grid. The convergence criterion is
$\epsilon=5\cdot10^{-4}$.

\textbf{Homoscedastic models.}
We first consider homoscedastic instances of Examples~\ref{ex:24a}
and \ref{ex:24b}, in which we jointly estimate a regression vector 
and a single concomitant parameter in the data fitting part.
We consider the generalized scaled lasso of
Example~\ref{ex:24a} and the generalized Huber of
Example~\ref{ex:24b} with exponents $q\in\{3/2,2\}$.
Figure~\ref{fig:predhomo} presents 
the estimation results of $b$ over the relevant $\alpha_1$-path.

\begin{figure}[t]
\centering
\includegraphics[width=14cm]{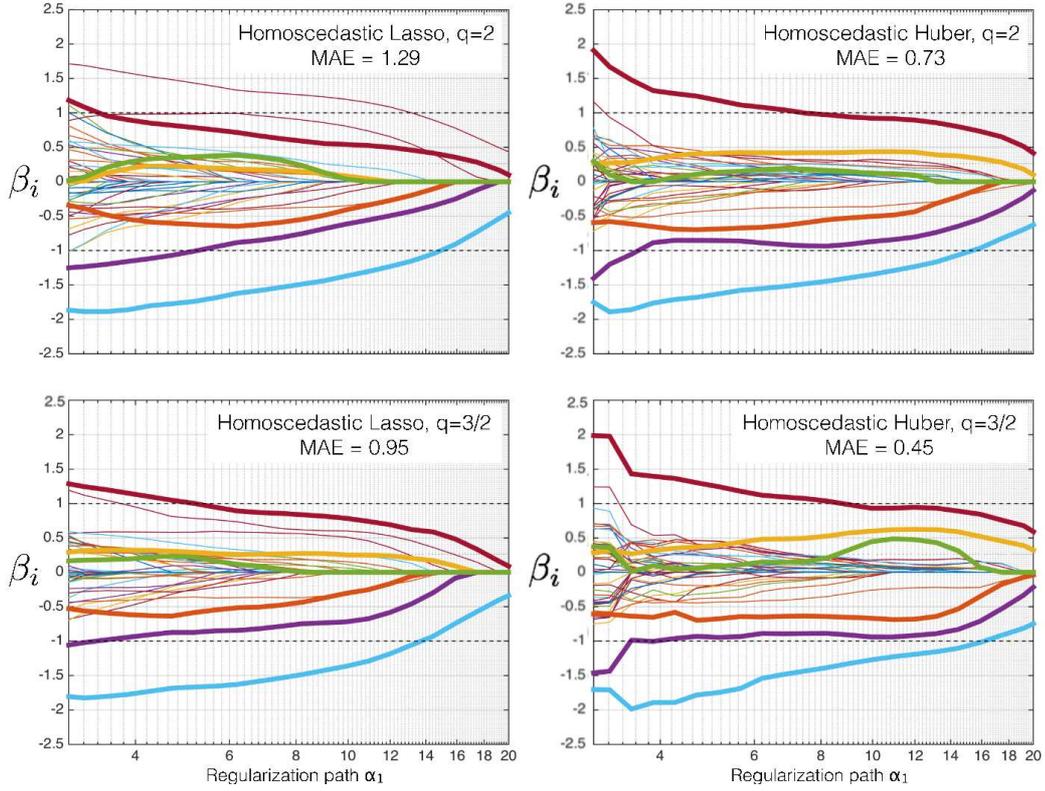}
\caption{Generalized homoscedastic lasso and Huber solutions for
$b$ when $q=2$ (top panel) and $q=3/2$ (bottom panel)
across the relevant $\alpha_1$-path. The minimally achievable mean
absolute error (MAE) is shown for all models. The six highlighted
$\beta_i$ trajectories mark the true non-zeros entries of
$\overline{b}$. The dashed black lines show the values of the true
$\overline{b}$ entries.
\label{fig:predhomo}}
\end{figure}

\textbf{Heteroscedastic models.}
We consider the same model instances as previously described
but in the heteroscedastic setting. We jointly estimate regression
vectors and concomitant scale parameters for each of the three
groups.  
Figure~\ref{fig:predhet} presents the results for heteroscedastic
lasso and Huber estimations of $b$ across the relevant
$\alpha_1$-path. The convergence criterion is
$\epsilon=10^{-4}$.

\begin{figure}[t]
\centering
\includegraphics[width=14cm]{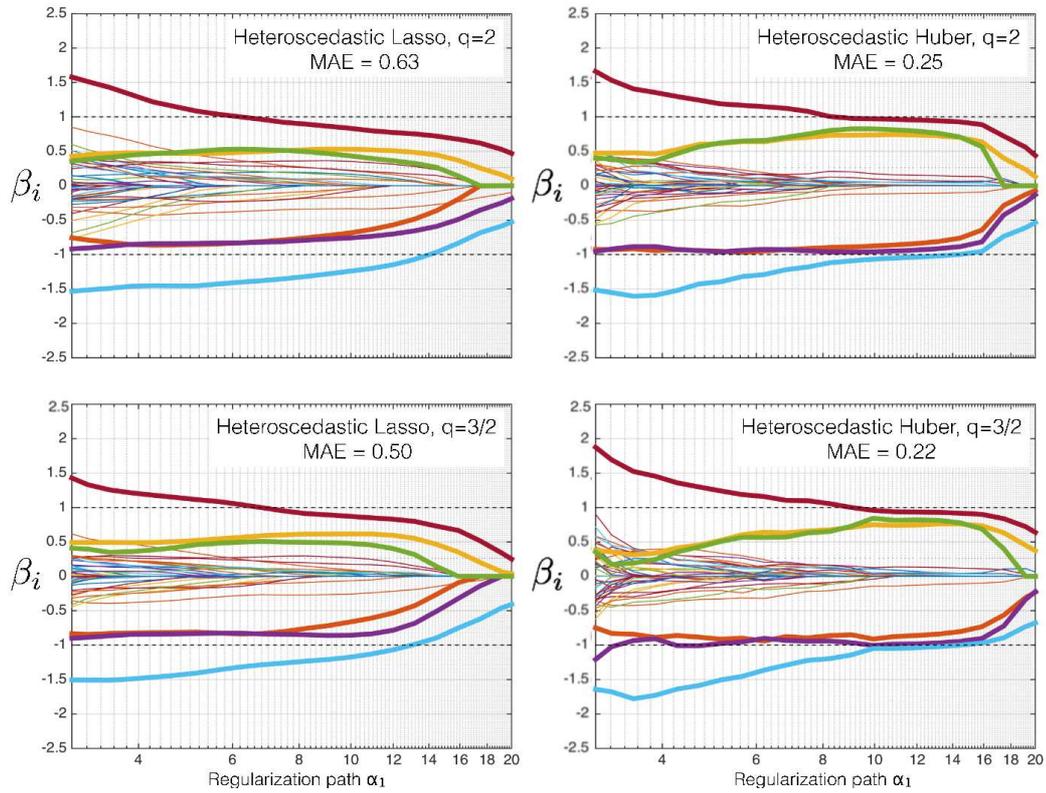}
\caption{Generalized heteroscedastic lasso and Huber solutions for
$\overline{b}$ when $q=2$ (top panel) and $q=3/2$ (bottom panel)
across the relevant
$\alpha_1$-path. The minimally achievable mean absolute error
(MAE) is shown for all models. The six highlighted $\beta_i$ 
trajectories mark the true non-zeros entries of $\overline{b}$. The 
dashed black lines show the values of the true $\overline{b}$ 
entries.
\label{fig:predhet}}
\end{figure}

The numerical experiments indicate that only heteroscedastic
M-estimators are able to produce convincing $b$ estimates (as
captured by lower MAE). The heteroscedastic Huber model with
$q=3/2$ (see Figure~\ref{fig:predhomo} lower right panel) achieves
the best performance in terms of MAE among all tested models.

\begin{figure}[t]
\centering
\includegraphics[width=15cm]{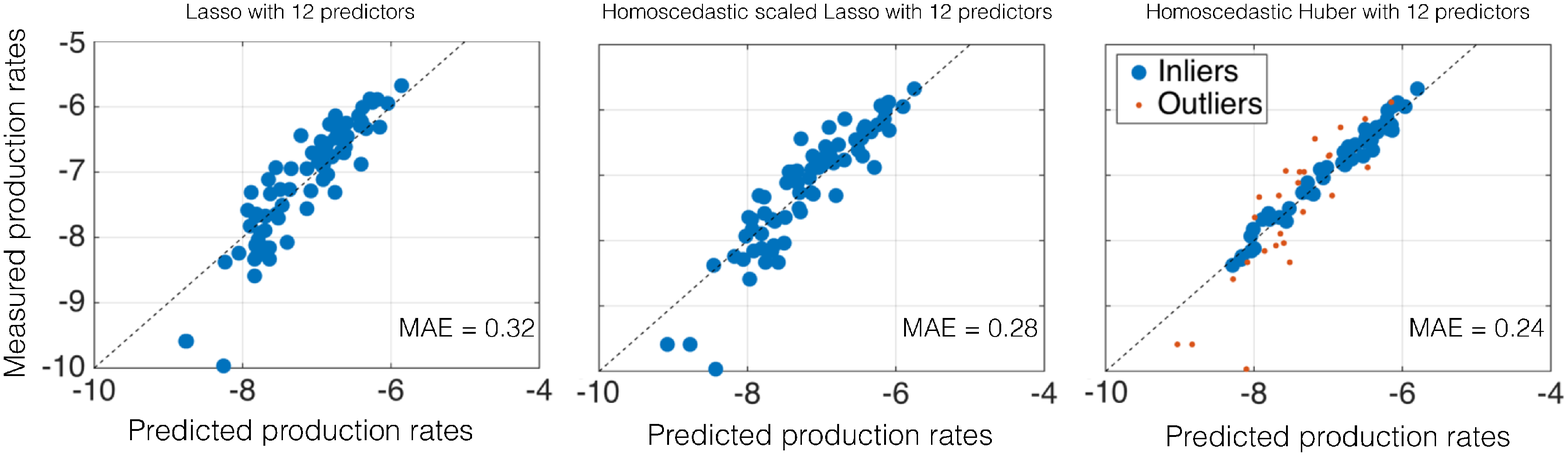}
\caption{Standard lasso, homoscedastic lasso, and homoscedastic
Huber log-production rate predictions with identical model
complexity (every vector $b$ comprises twelve non-zero components)
and associated MAE (computed across all samples). The Huber
M-estimator identifies 26 outliers (marked in red). 
\label{fig:bsubtilis}}
\end{figure}

\subsection{Robust regression for gene expression data}
We consider a high-dimensional linear regression problem
from genomics \cite{Buhlmann2014}. The design matrix $X$ consists
of $p=4088$ highly correlated gene expression profiles for $n =
71$ different strains of Bacillus subtilis (B. subtilis). The
response $y\in\RR^{71}$ comprises standardized riboflavin
(Vitamin B) log-production rates for each strain. The statistical
task is to identify a small set of genes that is highly
predictive of the riboflavin production rate. No grouping of the
different strain measurements is available. We thus consider the
homoscedastic models from Example~\ref{ex:4} with $\alpha_2=0$ and 
Example~\ref{ex:24b} with $\alpha_2=0$. We
optimize the corresponding perspective M-estimation models over
the $\alpha_1$-path where $\alpha_1=[0.623,\ldots,6.23]$ with 20
values
equally spaced on a log-linear grid. We compare the resulting
models with the standard lasso in terms of in-sample prediction
performance. Figure \ref{fig:bsubtilis} summarizes the results
for the in-sample prediction of the three different models with
identical model complexity (twelve non-zero entries in
$\overline{b}$).
To assess model quality, we compute the minimally achievable mean
absolute error (MAE) $\|Xb-y\|_1/n$ for these three
models. The Huber model achieves significantly improved MAE
(0.24) compared to lasso (0.32). The Huber models also identifies
26 non-zero components in the outlier vector $o$ 
(shown in red in the rightmost panel of Figure~\ref{fig:bsubtilis}).

\end{document}